\documentclass[10pt,reqno]{amsart}
\usepackage[svgnames,table]{xcolor}
\usepackage[colorlinks=true,citecolor=magenta,linkcolor=cyan]{hyperref} 
\usepackage{enumerate}
\usepackage{mathabx}
\usepackage{amssymb,amsfonts, amsmath}
\usepackage{hyperref}
\usepackage{enumitem}
\usepackage[title]{appendix}
\newtheorem*{question*}{Question}
\newtheorem{theorem}{Theorem}[section]
\newtheorem{lemma}[theorem]{Lemma}
\newtheorem{proposition}[theorem]{Proposition}
\newtheorem{corollary}[theorem]{Corollary}

\newtheorem{exmp}[theorem]{\normalfont{\em{Example}}}
\newtheorem{remarks}[theorem]{\normalfont{\em{Remarks}}}
\DeclareMathDelimiter{\langle}{\mathopen}{symbols}{"68}{largesymbols}{"0A}
\DeclareMathDelimiter{\rangle}{\mathclose}{symbols}{"69}{largesymbols}{"0B}
\DeclareMathSymbol{\cap}{\mathbin}{symbols}{"5C}
\DeclareMathSymbol{\cup}{\mathbin}{symbols}{"5B}
\DeclareMathSymbol{\infty}{\mathord}{symbols}{"31}
\makeatletter
\renewcommand*\env@matrix[1][\arraystretch]{%
\edef\arraystretch{#1}%
\hskip -\arraycolsep
\let\@ifnextchar\new@ifnextchar
\array{*\c@MaxMatrixCols c}}
\makeatother
\AtEndDocument{\bigskip{\footnotesize%
\textsc{Department of Mathematics, Jagiellonian University, ul. Prof. S. Łojasiewicza 6, 30-348 Kraków, Poland} \par  
 \textit{E-mail address}: \texttt{konstantinos.tsouvalas@uj.edu.pl}}}

\title[nonArchimedean discrete groups and quasi-isometric doubles of free groups]{nonArchimedean discrete groups and quasi-isometric doubles of free groups}

\author{Konstantinos Tsouvalas}

\begin{document}

\frenchspacing

\maketitle
\thispagestyle{empty}
\begin{abstract} For any nonarchimedean local field $k$, we exhibit the first examples of Zariski-dense, discrete subgroups of special linear groups over $k$, which are not commensurable to any lattice in the points of a semisimple group over a local field, and contain an infinite index subgroup isomorphic to a lattice in the \hbox{$k$-points of a higher rank $k$-semisimple group.}
\par We prove that any Gromov-hyperbolic double of a  non-elementary free group along a cyclic subgroup embeds as a quasi-isometrically embedded subgroup of $\mathsf{SL}_5(k')\times \mathsf{SL}_5(k')$, where $k'$ is a discretely valued field. More precisely, $k'$ is the field of rational functions $k(t)$ equipped with the Gauss valuation and $k$ is any nonarchimedean local field of characteristic not equal to $2$ or $3$.\end{abstract}

\section{Introduction}

Let $k$ be a discretely valued field equipped with the absolute value induced by the discrete valuation on $k$. We equip $\mathsf{SL}_n(k)$ (and any of its $k$-subgroups) with the $\ell_{\infty}$-norm induced by the absolute value on $k$. Recall that a subgroup $\Gamma$ of $\mathsf{SL}_n(k), n\geq 2$, is called {\em discrete} if it is discrete with respect to the subspace topology induced by the matrix norm. The subgroup $\Gamma$ of $\mathsf{SL}_n(k)$ is {\em metrically proper} if for any positive integer $L$ there are finitely many elements of $\Gamma$ of norm at most equal to $L$. In the latter case $\Gamma$ acts isometrically and properly on the Bruhat--Tits building of $\mathsf{SL}_n(k)$, see \cite{BT72}. 

In this paper, we provide new constructions of metrically proper subgroups of $k$-matrix groups when $k$ is a discretely valued field. Among the finitely generated groups that are not abstractly commensurable to $S$-arithmetic groups, those that were known prior to this work to admit discrete and faithful matrix representations over nonarchimedean local fields were, up to finite-index, free products of free abelian groups and were exhibited by Douba, Kubrak and the author of the present paper in \cite{DKT}. These groups are constructed as thin subgroups of any lattice in $\mathsf{SL}_3(k)$. When $k$ is non-locally compact, the property of a subgroup of a $k$-semisimple group being discrete is weaker\footnote{For example, for any field $k_0$, $\mathsf{SL}_n(k_0)$ embeds as a discrete and bounded subgroup of $\mathsf{SL}_n(k_0(t))$, where $k_0(t)$ is equipped with the discrete valuation sending $f_1(t)/f_2(t)$ to $\textup{deg}f_2-\textup{deg}f_1$.} than being metrically proper. In the non-locally compact setting, metrically proper examples include any discrete subgroup over a nonarchimedean local field (seen as a subgroup of a matrix group over a non-local extension) and closed surface groups \cite[Cor. 1.4]{BIPP-PLMS}. 

 We prove that the classes of discrete and metrically proper subgroups over any infinite discretely valued field is closed under taking free products.

\begin{theorem}\label{mainthm-1} Let $k$ be a discretely valued field and $\Gamma_1,\Gamma_2$ be discrete \textup{(}resp. metrically proper\textup{)} subgroups of $\mathsf{SL}_n(k)$. The free product $\Gamma_1\ast \Gamma_2$ embeds as a discrete \textup{(}resp. metrically proper\textup{)} subgroup of $\mathsf{SL}_{d+1}(k)$, $d=\binom{4n^2+2n}{2n}$.
\end{theorem}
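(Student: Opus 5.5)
The plan is a ping-pong argument on a projective space, combined with an exterior/symmetric power construction. The number $d=\binom{4n^2+2n}{2n}$ is the dimension of the $2n$-th exterior power of a vector space of dimension $4n^2+2n$. This suggests first moving $\Gamma_1,\Gamma_2$ into a common larger group where they can be conjugated into "general position", and then passing to $\wedge^{2n}$ to get attracting/repelling dynamics. The final $+1$ suggests adding a trivial summand so that each factor's action carries a proximal or expanding behaviour visible through the norm.

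\textbf{Step 1 (enlarging and twisting).} Embed $\mathsf{SL}_n(k)$ into $\mathsf{SL}_{N}(k)$ with $N=4n^2+2n$. A natural choice is $V=k^n\oplus M_n(k)^{\oplus 4}\ominus\cdots$, or more simply a direct sum of $\mathrm{Ad}$-type and standard representations. Each $\Gamma_i$ acts on $k^N$ through a representation $\rho_i$ that is still discrete (resp.\ metrically proper), since the standard representation appears as a summand. Then conjugate $\rho_2$ by an element $g\in\mathsf{SL}_N(k)$ chosen with entries of very large (or very small) valuation, e.g.\ a diagonal matrix in a uniformizer $\pi^{m}$ with $m\gg 0$. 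The aim is a subset $U_1$ of $\mathbb{P}(\wedge^{2n}k^N\oplus k)$, stable under all nontrivial elements of $\rho_1(\Gamma_1)$ after normalization, and a disjoint $U_2$ for $g\rho_2(\Gamma_2)g^{-1}$.

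\textbf{Step 2 (ping-pong with norms).} Discreteness means nontrivial elements of $\Gamma_i$ are bounded away from the identity, so every $\gamma\ne1$ has $\|\gamma-1\|\ge c$. This is the one quantitative input available. In the nonarchimedean setting I would use the ultrametric inequality to build "cones": the vectors whose component in a chosen $2n$-dimensional block dominates in absolute value, so that $\wedge^{2n}$ of a nontrivial element sends the complementary cone into the block cone. The extra trivial line lets an element that is close to bounded still move a distinguished vector by a definite amount. This yields faithfulness of $\Gamma_1\ast\Gamma_2$ by ping-pong.

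\textbf{Step 3 (discreteness and properness).} Next, show that for a reduced word $w=\gamma_1\gamma_2\cdots\gamma_r$ the norm of the image is at least $\prod\max(\|\gamma_j\|,C)$ up to constants $\lambda^{r}$ with $\lambda>1$, where $C>1$ comes from the conjugation $g$. Ultrametricity makes the lower bound exact, since there is no cancellation once the leading terms sit in the correct cones. This gives discreteness, because the words stay outside a neighbourhood of the identity. It also gives metric properness: words of bounded norm have bounded length and factors of bounded norm, and there are finitely many such when each $\Gamma_i$ is metrically proper.

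\textbf{Main obstacle.} The hard part is Step 2 for elements of $\Gamma_i$ that are bounded, for instance a discrete bounded subgroup such as $\mathsf{SL}_n(k_0)\subset\mathsf{SL}_n(k_0(t))$. Such elements have no proximality, so the cone must be produced purely from $\gamma\ne 1$ being at definite distance from $1$. I would first try using the linear map $\gamma-1$, which appears within $\mathrm{Ad}$ or $\mathrm{End}$ summands (explaining the $n^2$ terms). This map has norm $\ge c$, and after scaling by $g$ it becomes dominant in the exterior power, sending a fixed vector into the "$\Gamma_1$-cone". Getting the dimension count exactly to $N=4n^2+2n$ and $2n$ is where I expect the bookkeeping to be delicate.
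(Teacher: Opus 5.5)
Several parts of your frame match the paper. Making every nontrivial element large via $\gamma-\textup{I}$ is the paper's first step: it uses $\iota_\nu(\gamma)=\begin{pmatrix}\textup{I} & \nu(\gamma-\textup{I})\\ & \gamma\end{pmatrix}$ with $|\nu|$ large, so that $\|\iota_\nu(\gamma)\|\geq|\nu|\varepsilon$. The extra trivial line also matches, as does Step 3, where a multiplicative lower bound on norms of reduced words gives discreteness and metric properness.

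\textbf{The gap.} Step 2 is missing, and the mechanism you suggest would not supply it. Ping-pong needs a fixed attracting region and a fixed repelling region that work simultaneously for \emph{all} nontrivial elements of an arbitrary discrete $\Gamma_1$. Concretely, you need a fixed vector $u$ and a fixed covector $u^{*}$ with $|u^{*}(\rho(\gamma)u)|\geq c\|\rho(\gamma)\|$ for every $\gamma\neq 1$. Neither $\wedge^{2n}$ nor a diagonal conjugation in powers of $\pi$ produces this. The elements of, say, a lattice push vectors in directions spread over all of projective space, so no block cone is uniformly attracting; this has nothing to do with boundedness. Your $\textup{End}$-summand idea does give a vector moved a definite amount by every $\gamma\neq 1$ ($\textup{I}\mapsto\gamma-\textup{I}$). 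But the covector side fails: a linear functional on $\textup{End}(k^n)$ has a hyperplane as kernel, and for a lattice the directions $[\gamma-\textup{I}]$ come arbitrarily close to it. Separately, a diagonal matrix cannot swap the two ping-pong regions; the paper uses the order-two element $w_a$ of Lemma \ref{order2}.

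\textbf{The missing idea.} What is needed is a degree-$m$ matrix coefficient that behaves like a norm; this is Lemma \ref{lemma}. Take $\rho=\textup{sym}^m\circ j_n$ with $m\geq n^2|\textup{val}(\pi)|$, $v_1=\sum_j\pi^je_j^m$ and $v_2=\sum_i\pi^{ni}e_i^m$. Then $\rho(g)v_1\cdot v_2=\sum_{i,j}g_{ij}^m\pi^{ni+j}$, and the nonzero terms have pairwise distinct absolute values. Indeed, two of their valuations can agree only if $m$ divides a nonzero integer of absolute value less than $n^2|\textup{val}(\pi)|\leq m$. So the ultrametric inequality is an equality, and $|\rho(g)v_1\cdot v_2|\geq|\pi|^{n+1}\|g\|^m$.

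After $\iota_\nu$, this bound applies to every nontrivial element. Conjugating by $h$ with $[he_1]=[(1,v_1)]$ and $[(h^{-1})^{\textup{t}}e_{d+1}]=[(-v_1\cdot v_2,v_2)]$ makes the $(d+1,1)$-entry of $h^{-1}j_d(g)h$ proportional to $(gv_1-v_1)\cdot v_2$. Hence each nontrivial element maps $B_\theta([e_1])$ outside $\mathcal{N}_\theta(e_{d+1}^{\perp})$, with expansion.

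This construction is also where $d$ comes from: $d=\dim\textup{Sym}^{4n^2}k^{2n+1}=\binom{4n^2+2n}{2n}$. The $2n$ is produced by $\iota_\nu$ and $4n^2=(2n)^2$ by the non-cancellation requirement, not by an exterior power.
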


Theorem \ref{mainthm-1} provides new examples of discrete and metrically proper groups over discretely valued fields. When $k$ is a local field the theorem above can be viewed as the nonarchimedean analogue of the combination theorem of Danciger--Gu\'eritaud--Kassel in \cite[Thm. 1.6]{DGKcomb} for discrete subgroups of real matrix Lie groups. 

\enlargethispage{1\baselineskip}

As a consequence of Theorem \ref{mainthm-1} and its proof, we also establish the following theorem.

\begin{theorem}\label{thm-embd} Let $k$ be a nonarchimedean local field. For any discrete subgroup $\Gamma$ of $\mathsf{SL}_n(k)$ there exist:
\begin{enumerate}\setlength{\itemsep}{1.2pt}
\item an embedding $\tau:\mathsf{SL}_n(k)\rightarrow \mathsf{SL}_{d+1}(k)$, where $d=\binom{n^2+n}{n}$, and
\item  a Zariski-dense infinite covolume subgroup $\Gamma'$ of $\mathsf{SL}_{d+1}(k)$
\end{enumerate} with the property that $\tau(\Gamma)=\Gamma' \cap \tau(\mathsf{SL}_n(k))$ is a subgroup of  infinite index in $\Gamma'$. Moreover, $\Gamma'$ is isomorphic to the free product of $\Gamma$ with a finite-rank free group. \end{theorem}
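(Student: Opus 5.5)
We will build $\Gamma'$ by a ping-pong argument inside a symmetric-power representation, following the construction behind Theorem \ref{mainthm-1}. Note $n^2+n=n(n+1)$, so $d=\binom{n^2+n}{n}$ is the dimension of the $n$-th exterior power $\wedge^n V$ of a space $V$ of dimension $n^2+n$.

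\emph{The embedding $\tau$.} Let $V=k^{n}\otimes k^{n+1}$ or a similar auxiliary space of dimension $n^2+n$. Let $\mathsf{SL}_n(k)$ act on $V$ through a faithful representation $\rho$ (e.g.\ $n+1$ copies of the standard representation, or standard $\oplus$ adjoint, with trivial padding). Compose with the representation on $W=\wedge^n V$, of dimension $d$, and take the block embedding into $\mathsf{SL}_{d+1}(k)$ with a trivial $1$-dimensional summand. This gives $\tau$. The purpose of this choice is twofold:
\begin{enumerate}
\item $\tau(\mathsf{SL}_n(k))$ fixes a line $\ell_0$ (the extra coordinate, or a determinant-type line in $W$);
\item $\tau(\mathsf{SL}_n(k))$ preserves a complementary hyperplane $H_0$.
\end{enumerate}
Since $\tau(\Gamma)$ stays in a compact region of the projective space only near $H_0$, the dynamics of $\tau(\Gamma)$ on $\mathbb{P}(k^{d+1})$ has a large \enquote{free} region near $\ell_0$ and disjoint from $H_0$.

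\emph{Ping-pong.} Choose elements $g_1,\dots,g_r\in\mathsf{SL}_{d+1}(k)$ that are proximal, with attracting and repelling points and hyperplanes in general position with respect to $\ell_0$ and $H_0$. Replacing them by high powers, we obtain disjoint neighbourhoods $U_i^\pm$ with $g_i^{\pm N}(\text{complement of }U_i^{\mp})\subset U_i^{\pm}$. Set $A$ to be a neighbourhood of the region where $\tau(\Gamma)$ acts, namely where all nontrivial elements of $\tau(\Gamma)$ map the union of the $U_i^\pm$. We must check that each nontrivial element of $\tau(\Gamma)$ sends $\bigcup U_i^\pm$ into $A$, and that $A$ is disjoint from the $U_i^\pm$.

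\emph{The main obstacle.} This check is the hard step. Elements of $\tau(\Gamma)$ can be bounded or even torsion, so they need not contract anything. The discreteness of $\Gamma$ must therefore be used: only finitely many elements of $\tau(\Gamma)$ are bounded in the relevant sense, since $k$ is locally compact and $\Gamma$ is discrete, so the bounded ones form a finite set. One handles the finitely many bounded elements by choosing the $U_i^\pm$ inside a set whose $\tau(\Gamma)$-orbit images avoid each other, using Zariski-genericity. The unbounded ones are handled by the Cartan decomposition: for $\gamma\in\Gamma$ with large norm, the exterior power forces $\tau(\gamma)$ to push generic points close to a subspace near $H_0$. This is where the exterior-power construction from the proof of Theorem \ref{mainthm-1} is reused.

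\emph{Conclusion.} Ping-pong then yields that $\Gamma'=\langle\tau(\Gamma),g_1^N,\dots,g_r^N\rangle$ is isomorphic to $\Gamma\ast F_r$. The ping-pong sets also give discreteness: a sequence of elements tending to the identity would have to fix the ping-pong configuration approximately, which forces it into $\tau(\Gamma)$, and $\tau(\Gamma)$ is discrete. Hence $\Gamma'$ is discrete.

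\emph{Remaining properties.} The intersection $\Gamma'\cap\tau(\mathsf{SL}_n(k))$ equals $\tau(\Gamma)$. Indeed, any reduced word involving the $g_i$ moves $\ell_0$ into some $U_i^\pm$, whereas $\tau(\mathsf{SL}_n(k))$ fixes $\ell_0$. The subgroup $\tau(\Gamma)$ has infinite index in $\Gamma'$, as is clear in a free product with $F_r$ for $r\geq 1$. For Zariski density, choose the $g_i$ generic: the Zariski closure contains proximal elements in general position with those of $\tau(\Gamma)$. Taking $r\geq 2$ and generic $g_i$, the closure is irreducible after a finite-index passage, and it acts irreducibly, hence it is all of $\mathsf{SL}_{d+1}$. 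Finally, $\Gamma'$ has infinite covolume because the domain of discontinuity (the complement of the limit set, which contains nonempty open sets) is nonempty. A lattice in $\mathsf{SL}_{d+1}(k)$ has no such open set, since lattices act minimally on the flag variety by Howe--Moore.
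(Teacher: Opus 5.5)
Your proposal has a genuine gap at exactly the step you call the main obstacle, and nothing in your exterior-power setup fills it. To play ping-pong against the infinitely many unbounded elements of $\tau(\Gamma)$ you need a \emph{uniform} configuration. This means a ball around a point $p$ and a hyperplane $H\ni p$ such that every nontrivial $\tau(\gamma)$ maps the ball uniformly far from $H$, with uniform expansion. The incidence $p\in H$ is essentially forced, since the repelling hyperplane of a free generator contains the attracting point of its inverse. The Cartan decomposition only tells you that each large $\tau(\gamma)$ pushes points away from a $\gamma$-dependent repelling subspace. As $\gamma$ runs over, say, a lattice, these subspaces move around; already for the standard representation the kernels of the limits of $\gamma/\|\gamma\|$ cover $\mathbb{P}(k^n)$. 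So ``generic points'' is not uniform in $\gamma$.

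The missing ingredient is Lemma \ref{lemma}: vectors $v_1,v_2$ with $|(\rho(g)v_1-v_1)\cdot v_2|\geq\|\rho(g)\|$ for all $g$ of large norm. The paper obtains it for $\rho=\textup{sym}^{m}\circ j_n$ with $m=n^2$, $v_1=\sum_j\pi^je_j^m$ and $v_2=\sum_j\pi^{nj}e_j^m$. Then $\rho(g)v_1\cdot v_2=\sum_{i,j}\pi^{ni+j}g_{ij}^m$. Because $m\geq n^2$ cannot divide the nonzero integers $n(i-s)+j-\ell$, the nonzero terms have pairwise distinct absolute values. The ultrametric inequality therefore rules out cancellation and gives $|\rho(g)v_1\cdot v_2|\geq|\pi|^{n+1}\|g\|^m=|\pi|^{n+1}\|\rho(g)\|$.

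This is where $d=\binom{n^2+n}{n}=\dim\textup{Sym}^{n^2}k^{n+1}$ comes from. Its agreement with $\dim\wedge^nV$ is a numerical coincidence, and there is no exterior power in the proof of Theorem \ref{mainthm-1}. For $\wedge^nV$ you would have to prove an analogous non-vanishing statement for a single matrix coefficient of $\wedge^n\rho$ on all limits of $\wedge^n\rho(g)/\|\wedge^n\rho(g)\|$, and you do not attempt this. Once the lemma is available, the finitely many small elements of $\Gamma$ are absorbed by a Baire perturbation of $v_1,v_2$. One then conjugates so that $[(1,v_1)^{\textup{t}}]\mapsto[e_1]$ and $(-v_1\cdot v_2,v_2)^{\perp}\mapsto e_{d+1}^{\perp}$. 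Note that the ball on which $\tau(\Gamma)$ plays is centred at $[(1,v_1)^{\textup{t}}]$, not near $\ell_0$, which is fixed by all of $\tau(\mathsf{SL}_n(k))$.

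Other steps need repair too. With $V=(k^n)^{\oplus(n+1)}$, $\wedge^n\rho$ kills the central elements $\zeta\textup{I}_n$ with $\zeta^n=1$. So $\tau$ is not an embedding whenever $k$ has a nontrivial $n$-th root of unity (e.g.\ $-\textup{I}_n$ for $n$ even, $\textup{char}k\neq 2$); the trivial summand in $j_n$ is what makes the paper's $\rho$ faithful. An irreducible Zariski closure need not be $\mathsf{SL}_{d+1}$ (think of $\textup{sym}^d(\mathsf{SL}_2)$), so choose the free group Zariski dense from the outset. You assert without proof that a domain of discontinuity is nonempty. Infinite covolume is easier: $\Gamma'\cong\Gamma\ast F_r$ surjects onto $F_r$, which is incompatible with property (T), or with Margulis' normal subgroup theorem, for lattices in $\mathsf{SL}_{d+1}(k)$.

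On the positive side, your fixed-line argument for $\Gamma'\cap\tau(\mathsf{SL}_n(k))=\tau(\Gamma)$ is a valid and simpler alternative to the paper's norm argument. The paper's version needs the ping-pong estimate for all large $h\in\mathsf{SL}_n(k)$, not only on $\Gamma$. To make yours work, first reduce to reduced words beginning and ending with free letters, and make sure $\ell_0$ lies in your set $A$. In the paper's configuration this holds for small $\epsilon$, because $(1,0)\cdot(-v_1\cdot v_2,v_2)=-v_1\cdot v_2\neq0$.
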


For any nonarchimedean local field $k$, Theorem \ref{thm-embd} applies to any lattice in the $k$-points of a semisimple $k$-group. This provides the first-known examples of Zariski-dense discrete subgroups of $\mathsf{SL}_{d+1}(k)$, which are not commensurable to any lattice in a semisimple group over a local field, containing an infinite index subgroup isomorphic to a lattice in the $k$-points of a higher rank $k$-semisimple group. These are new examples of finitely generated groups acting on locally compact Euclidean buildings, namely the Bruhat--Tits building of $\mathsf{SL}_{d+1}(k)$. As an explicit example, for any prime $p$, $\mathsf{SO}_n(\mathbb{Z}[\frac{1}{p}])\ast \mathbb{Z}$, $n\geq 3$, embeds as a subgroup of $\mathsf{SL}_m(\mathbb{Z}[\frac{1}{p}])$, for some $m$ depending only on $n$ (see Corollary \ref{SO}).

Motivated by Nori's problem (see \cite[Q. 8.3]{Fisher}), Chatterji--Venkataramana in \cite[Q.2]{CV-lattice} asked whether a Zariski-dense discrete subgroup  of a simple real algebraic group $G$, intersecting a higher-rank simple Lie subgroup of $G$ in a lattice, must be a lattice in $G$. While there is a positive answer\footnote{If the smaller Lie subgroup is not semisimple and in particular a horospherical subgroup of $G$, a positive answer to Nori's problem was proved in \cite{Benoist-Miquel, Oh}.} in many cases (see \cite[Thm. 1 \& 3]{CV-lattice}), a negative answer was subsequently obtained by Danciger--Gu\'eritaud--Kassel in \cite[Thm. 1.6 \& Cor. 1.9]{DGKcomb}. Theorem \ref{thm-embd} establishes counterxamples to the nonarchimedean analogue of \cite[Q.2]{CV-lattice}.

As an application of Theorem \ref{mainthm-1} we prove uniform linearity of free products of linear groups over the polynomial ring of any integral domain.

 \begin{corollary}\label{R[t]} For any integral domain $\mathsf{R}$, the free product $\mathsf{SL}_n(\mathsf{R}[t]) \ast \mathsf{SL}_n(\mathsf{R}[t])$ embeds as a subgroup of $\mathsf{SL}_{d+1}(\mathsf{R}[t])$, $d=\binom{4n^2+2n}{2n}$. \end{corollary}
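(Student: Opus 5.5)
We will apply Theorem \ref{mainthm-1} with $k=\mathsf{Frac}(\mathsf{R})(t)$, the field of rational functions over the fraction field $F$ of $\mathsf{R}$, equipped with a discrete valuation for which $\mathsf{SL}_n(\mathsf{R}[t])$ is discrete. Any discrete valuation on $F(t)$ that is trivial on $F$ would do, but we must check which valuation makes $\mathsf{SL}_n(\mathsf{R}[t])$ discrete. The natural candidate is the $t$-adic valuation $v(f)=\textup{ord}_t f$. It is not suitable, because polynomials with high powers of $t$ have small absolute value: the element $1+t^m$ converges to $1$. The degree valuation $v(f_1/f_2)=\deg f_2-\deg f_1$ behaves the right way. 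With $|f|=e^{\deg f}$, every nonzero polynomial has $|f|\geq 1$. Hence if $g\in \mathsf{SL}_n(\mathsf{R}[t])$ satisfies $\|g-I\|<1$, every entry of $g-I$ is a polynomial of absolute value $<1$, so it is $0$ and $g=I$. Thus $\Gamma_1=\Gamma_2=\mathsf{SL}_n(\mathsf{R}[t])$ are discrete subgroups of $\mathsf{SL}_n(k)$.

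Theorem \ref{mainthm-1} then yields a faithful representation $\rho:\Gamma_1\ast\Gamma_2\to \mathsf{SL}_{d+1}(k)$ with discrete image, where $d=\binom{4n^2+2n}{2n}$. The remaining task, and the main obstacle, is to arrange that the image lies in $\mathsf{SL}_{d+1}(\mathsf{R}[t])$. This requires opening up the proof of Theorem \ref{mainthm-1}. I expect the embedding there to be built from polynomial representations of $\mathsf{SL}_n$, such as symmetric or exterior powers of the adjoint-type representation of dimension $\binom{4n^2+2n}{2n}$, together with conjugation by explicit ping-pong elements. These elements will be diagonal or unipotent matrices whose entries are powers of a uniformizer, here $t^{\pm N}$, or polynomials in it.

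Symmetric and exterior powers are polynomial maps with integer coefficients, so they send $\mathsf{R}[t]$-points to $\mathsf{R}[t]$-points. The issue is the conjugating elements. I would first try to choose the conjugator for each factor as an element of $\mathsf{GL}_{d+1}(F[t,t^{-1}])$ of the form $\mathrm{diag}(t^{a_1},\dots,t^{a_{d+1}})$, possibly composed with a unipotent matrix over $\mathsf{R}[t]$. Conjugating a matrix $(g_{ij})$ by such a diagonal element multiplies $g_{ij}$ by $t^{a_i-a_j}$. To keep the entries polynomial, we want $a_i-a_j\geq 0$ whenever $g_{ij}$ may be nonzero. I would try to realise this by ordering the weights so that the relevant blocks are upper triangular, or by replacing the conjugator $t^{-N}$ with $t^{N}$ and using the symmetry of the degree valuation.

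If this fails, the fallback is to alter the ping-pong data. Since only discreteness and faithfulness are needed, one may compose with an automorphism of $\mathsf{R}[t]$ such as $t\mapsto t^{M}$, or take the ping-pong elements to be unipotent, $I+t^{N}E$ with $E$ an integer matrix. These have entries in $\mathsf{R}[t]$ and inverses in $\mathsf{R}[t]$, so the full image is automatically contained in $\mathsf{SL}_{d+1}(\mathsf{R}[t])$. The ping-pong argument then only needs $N$ large relative to the valuation, which is the degree in $t$, and faithfulness of the resulting map $\Gamma_1\ast\Gamma_2\to\mathsf{SL}_{d+1}(\mathsf{R}[t])$ gives the corollary.
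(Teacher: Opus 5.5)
Your choice of the degree valuation on $\textup{Frac}(\mathsf{R})(t)$ and your argument that $\mathsf{SL}_n(\mathsf{R}[t])$ is discrete (nonzero polynomials have absolute value $\geq 1$) are correct, and they are exactly what the paper uses. The gap is the step you yourself call the main obstacle. Theorem \ref{mainthm-1} only gives a discrete faithful copy of $\Gamma_1\ast\Gamma_2$ in $\mathsf{SL}_{d+1}(\textup{Frac}(\mathsf{R})(t))$. That the image lies in $\mathsf{SL}_{d+1}(\mathsf{R}[t])$ is precisely what the corollary adds, and your proposal never proves it. Instead you offer conjectures about how the proof of Theorem \ref{mainthm-1} is built (``I would first try'', ``if this fails''), none of them checked against the actual construction.

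Some of these conjectures point the wrong way. The conjugators in the construction must move the lines $[e_1]$ and $[e_{d+1}]$ that govern the ping-pong, so diagonal conjugators $\textup{diag}(t^{a_i})$ are not the relevant objects. Thinking in terms of a uniformizer is also misleading, because the uniformizer $t^{-1}$ of the degree valuation is not in $\mathsf{R}[t]$.

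The missing idea is the subring version, Theorem \ref{mainthm-1'}. For any unbounded subring $A\subset k$, the construction can be run so that every auxiliary matrix and its inverse have entries in $A$, using elements of $A$ of large absolute value instead of powers of a uniformizer. Concretely:
\begin{enumerate}
\item one fixes $\pi\in A$ with $|\pi|>1$ and $|\textup{val}(\pi)|=v(A)$, so the vectors $v_1,v_2$ of Lemma \ref{lemma} lie in $A^d$;
\item the doubling $\iota_\nu$ uses $\nu\in A$, and $\rho_{nm}=\textup{sym}^m\circ j_n$ is defined over $\mathbb{Z}$;
\item the matrix $h$ of Proposition \ref{theorem-1} is $\sigma_1^{-1}\sigma_2^{-1}$ (unipotent matrices over $\mathbb{Z}[\pi]$) times a signed permutation matrix;
\item the element $w=w_a$ of Lemma \ref{order2} is an involution in $\mathsf{SL}_{d+1}(A)$ with $a\in A$.
\end{enumerate}
Hence the group of Proposition \ref{theorem-1} lies in $\mathsf{SL}_{d+1}(A)$.

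For $A=\mathsf{R}[t]$ one takes $\pi=t$, so $v(\mathsf{R}[t])=1$. This matters: the dimension in Theorem \ref{mainthm-1'} is $\binom{4\ell^2v(A)+2\ell}{2\ell}$, and only for $v(A)=1$ does it give the stated $d$.

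Your unipotent fallback $I+t^{N}E_{1,d+1}$ would actually work in place of $w_a$. For $N$ large, it and its inverse send the complement of $\mathcal{N}_{\theta}(e_{d+1}^{\perp})$ into $B_{\theta}([e_1])$ with large expansion. But you would still need integral versions of $h$, $v_1$, $v_2$, and of the representation producing the inequality $|(\gamma v_1-v_1)\cdot v_2|\geq \|\gamma\|$. That is where choosing $\pi\in A$ with $|\pi|>1$, rather than a uniformizer, is essential.
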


The above linearity result can be viewed as a nonarchimedean analogue of the $\mathbb{Z}$-linearity result \cite[Cor. 1.10]{DGKcomb}.
\subsection*{Nonarchimedean quasi-isometric embeddings of doubles of free groups} 
For a group $\Gamma$ and a subgroup $\Lambda$ of $\Gamma$, the {\em double of $\Gamma$ along $\Lambda$} is the amalgam $\Gamma\ast_{\Lambda}\Gamma$ of two copies of $\Gamma$ along $\Lambda$. For a local field $k$, the Gauss valuation on the field of rational functions $k(t)$ extends the discrete valuation on $k$ (see \S \ref{Gauss}). Equip $\mathsf{SL}_d(k(t))$ with the $\ell_{\infty}$-norm induced by the Gauss valuation on $k(t)$.  A finitely generated subgroup $\Gamma$ of $\mathsf{SL}_n(k(t))\times \mathsf{SL}_n(k(t))$ is {\em quasi-isometrically embedded} \hbox{if there exist $c,a>0$} such that for every $\gamma\in \Gamma$, $$||\gamma ||\cdot ||\gamma^{-1}||\geq e^{a|\gamma|_{\Gamma}-c}$$ where $||\gamma^{\pm 1}||$ is the norm of $\gamma^{\pm 1}$ (seen as a matrix in $\mathsf{SL}_{2n}(k)$) and $|\cdot|_{\Gamma}$ is a word length function induced by a choice of word metric on $\Gamma$. Having the previous conventions we obtain the following.

\begin{theorem}\label{qie-thm} Let $k$ be a nonarchimedean local field, of characteristic not equal to $2$ or $3$, and $\Gamma$ be the double of a finite-rank free group along a maximal cyclic subgroup. Then $\Gamma$ embeds as a quasi-isometrically embedded subgroup of $\mathsf{SL}_{5}(k(t))\times \mathsf{SL}_5(k(t))$.\end{theorem}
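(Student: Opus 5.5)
Write the double as $\Gamma=F\ast_{\langle w\rangle}F$, where $F$ is free of rank $r\geq 2$ and $w$ generates a maximal cyclic subgroup. By Bestvina--Feighn, $\Gamma$ is Gromov-hyperbolic exactly when $w$ is not a proper power, which holds here. The plan is to build a representation $\rho=(\rho_1,\rho_2)\colon\Gamma\to\mathsf{SL}_5(k(t))\times\mathsf{SL}_5(k(t))$ from a single "bending" construction over the Gauss-valued field.

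\textbf{Step 1: a base representation of $F$.} Start from a quasi-isometric embedding $\sigma\colon F\to\mathsf{SL}_5(k)$ for which $\sigma(w)$ is proximal, with distinct-modulus eigenvalues on a rank-one axis, and has a large centralizer $Z$ in $\mathsf{SL}_5$. A convenient choice is to compose a Schottky embedding of $F$ into $\mathsf{SL}_2(k)$ with the $4$-dimensional irreducible representation into $\mathsf{SL}_5$ acting on $\mathrm{Sym}^4$. Then $\sigma(w)$ is diagonalizable over $k$, and its centralizer contains a $4$-dimensional split torus $T$. The restriction $\mathrm{char}\,k\neq 2,3$ is exactly what makes $\mathrm{Sym}^4$ irreducible with the expected weights, which is a hint that this is the intended choice. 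Since we work in a product, $\rho_1$ and $\rho_2$ can be taken to be two different bendings.

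\textbf{Step 2: bending with a transcendental.} Pick $z(t)\in T(k[t])$ that commutes with $\sigma(w)$ and whose entries are powers of $t$ with varying exponents. Define
\[
\rho_1=\sigma \ast_{\langle w\rangle} \big(z(t)\,\sigma\, z(t)^{-1}\big).
\]
This is well defined because $z$ centralizes $\sigma(w)$. Define $\rho_2$ in the same way, using $z(t)^{-1}$ or a different torus direction. The Gauss valuation turns conjugation by $z(t)$ into a translation in the Bruhat--Tits building of $\mathsf{SL}_5(k(t))$ along an apartment containing the axis of $\sigma(w)$. Meanwhile $\sigma(F)$ is bounded in the $t$-direction, because its entries are constants with Gauss norm equal to their $k$-norm. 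Faithfulness then follows from a ping-pong or normal-form argument. For a reduced word $g_1h_1g_2h_2\cdots$ with $g_i\in F\smallsetminus\langle w\rangle$ and $h_i$ in the conjugated copy minus $\langle w\rangle$, the leading $t$-degree of the product is forced to grow linearly in the number of syllables. The reason is that $\sigma(g_i)$ does not stabilize the eigenline configuration of $\sigma(w)$, so no cancellation can occur in the top $t$-degree coefficient.

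\textbf{Step 3: quasi-isometry estimate (the main obstacle).} We need $\|\rho(\gamma)\|\|\rho(\gamma)^{-1}\|\geq e^{a|\gamma|-c}$. Write $\gamma$ in normal form $\gamma=w^{m}g_1h_1\cdots$, with each $g_i,h_i$ chosen as a shortest coset representative modulo $\langle w\rangle$. Then $|\gamma|$ is comparable to $|m|+\sum_i(|g_i|+|h_i|)$ plus the number of syllables; this uses quasiconvexity of $\langle w\rangle$ in $F$. The intended mechanism is the following.
\begin{itemize}
\item The $t$-degree, that is, the Gauss valuation, grows with the number of syllables and controls the syllable count.
\item The $k$-valuation part of the norm controls the lengths $|g_i|$, via $\sigma$ being a quasi-isometric embedding with uniform transversality to the $w$-axis.
\end{itemize}
The difficulty is that a single representation can lose the $k$-norm information through cancellation between $t$-degrees and the $k$-absolute values. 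The role of the second factor $\rho_2$ is to recover it: the bending in the opposite direction makes one of the two factors see the long subwords. The estimate is proved by a local-to-global argument. First, a uniform lower bound on "angles" between the attracting and repelling flags of consecutive syllables is needed; I would establish this using proximality in $\mathrm{Sym}^4$ together with the Schottky property. Then a Gromov-product argument in the building combines the per-syllable estimates into the global bound. Expect this angle control, uniform in the $w$-power $m$ absorbed into neighbouring syllables, to be the hardest step.
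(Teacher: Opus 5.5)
Your architecture matches the paper's: a quasi-isometric Schottky embedding of $F$ in $\mathsf{SL}_2(k)$ with $w$ diagonal, composed with $\mathrm{sym}^4$. The second vertex group is conjugated by a diagonal matrix over $k(t)$ that centralizes $\mathrm{sym}^4(w)$, and the two factors are bent by this matrix and by its inverse. The gap is in the mechanism you propose for the estimates, which rests on a false premise. For the Gauss valuation $\mathrm{val}(t)=0$, so $|t|=1$. Consequently a matrix $z(t)=\mathrm{diag}(t^{a_1},\dots,t^{a_5})$ has $\|z(t)^{\pm1}\|=1$ and fixes pointwise the standard apartment on which $\sigma(w)$ translates: it rotates about that apartment rather than translating along it. Moreover, the $t$-degree of an entry contributes nothing to its Gauss norm, which is the maximum of the $k$-absolute values of its coefficients. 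So ``the $t$-degree, that is, the Gauss valuation'' does not control the syllable count; every lower bound must come from the $k$-absolute value of some single $t$-coefficient. In the paper $t$ serves only to isolate such a coefficient. Take the basis $(e_1^2e_2^2,e_1^4,e_2^4,e_1^3e_2,e_1e_2^3)$ and $\alpha=\mathrm{diag}(t^{-1},tz_1,tz_2,1,1)$. Each block $\mathrm{sym}^4(\gamma_{2i-1})\,\alpha\,\mathrm{sym}^4(w^{r_i}\gamma_{2i}w^{s_i})\,\alpha^{-1}$ has first-column entries of $t$-degree at most $2$ and all other entries of degree at most $1$. Hence the $t^{2\ell}$-coefficient of the $(1,1)$-entry of a product of $\ell$ blocks is exactly $D_1\cdots D_\ell$, where $D_i$ is the $t^2$-coefficient of the $(1,1)$-entry of the $i$-th block, and the Gauss norm is at least $|D_1\cdots D_\ell|$.

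Writing $w=\mathrm{diag}(\pi,\pi^{-1})$, one has $(w^rgw^s)_{11}(w^rgw^s)_{12}=\pi^{2r}g_{11}g_{12}$, which gives $D_i=6z_1\pi^{4r_i}X_i^2+6z_2\pi^{-4r_i}Y_i^2$, with $X_i,Y_i$ products of four entries of the two syllables. With $z_1=z_2=1$ (pure powers of $t$, as you propose) this approach runs into three problems:
\begin{itemize}
\item Nothing prevents the two terms from cancelling; they sum to $0$ as soon as $(\pi^{4r_i}X_i/Y_i)^2=-1$. So neither faithfulness nor the bound follows, since your transversality remark only gives $X_iY_i\neq 0$.
\item Even without cancellation, the per-block factor $|6|\varepsilon^8(\|\gamma_{2i-1}\|\|\gamma_{2i}\|)^4$ need not beat the losses $e^{-c}$ coming from the quasi-isometry constants.
\item Separating $e_1^4$ and $e_2^4$ by different $t$-exponents leaves a single term in this computation, which controls only one sign of $r_i$.
\end{itemize}
The missing idea is to put constants $z_1,z_2\in k$ into the conjugator, with $|z_i|$ large and $\mathrm{val}(z_1z_2^{-1})=1$. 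The two terms of $D_i$ then have valuations of different parity, so $|D_i|$ equals their maximum and $|D_i|\geq e^{8c}|\pi|^{4|r_i|}(\|\gamma_{2i-1}\|\|\gamma_{2i}\|)^4$. The ``angle control'' you single out as the hardest step is exactly Lemma~\ref{doublecoset}: $\Gamma_i=\langle w\rangle\Gamma_{i\varepsilon}\langle w\rangle$, where elements of $\Gamma_{i\varepsilon}$ have all entries comparable to their norm. The role of the second factor is also concrete. The factor bent by $\alpha$ sees the left exponents $r_i$ of the second-vertex-group syllables. The factor bent by $\alpha^{-1}$, whose top coefficient uses column entries, sees the right exponents $s_i$. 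Adding the two bounds and applying the quasi-isometry estimate to each syllable $w^{r_i}\gamma_{2i}w^{s_i}$ finishes the proof directly, with no local-to-global argument in the building. (Incidentally, $\mathrm{char}\,k\neq 2,3$ is used only through $6=\binom{4}{2}\neq 0$.)
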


Theorem \ref{qie-thm} provides examples of one-ended Gromov-hyperbolic groups (e.g. closed surface groups) acting on the Bruhat--Tits building of $\mathsf{SL}_{5}(k(t))\times \mathsf{SL}_5(k(t))$ with orbit maps that are quasi-isometric embeddings. Moreover, we prove (see Theorem \ref{prod-qie}) that any double of a free group along a maximal cyclic subgroup embeds as a metrically proper subgroup of $\mathsf{SL}_2(k(t))\times \mathsf{SL}_2(k(t))$. All these constructions of metrically proper groups arise by amalgamating two conjugates of a quasi-isometric free subgroup of $\mathsf{SL}_2(k)$ in the ambient product. To the author's knowledge, Theorem \ref{qie-thm} provides the first-known examples of non-virtually Fuchsian\footnote{i.e hyperbolic groups that are not up to finite-index a free or a surface group}, metrically proper hyperbolic subgroups of a matrix group over a discretely valued field. In contrast, the existence of discrete, non-virtually free, hyperbolic subgroups into some $k$-semisimple group over a nonarchimedean local field $k$ remains an open problem (see also \cite[Q.1]{FLSS}). Prior to the aforementioned results, the only known examples of metrically proper, non-virtually free hyperbolic groups over discretely valued fields were the quasi-isometrically embedded surface subgroups of $\mathsf{SL}_n(\mathbb{R}(X))$\footnote{$\mathbb{R}(X)$ is equipped with the discrete valuation sending $f_1(X)/f_2(X)$ to $\textup{deg}f_2-\textup{deg}f_1$}, $n\geq 3$, from \cite[Cor. 1.14]{BIPP-PLMS} and \cite[Ex. 1.15 \&  1.16]{BIPP-PLMS}, obtained \hbox{as degenerations of Hitchin representations in $\mathsf{SL}_n(\mathbb{R})$.}

\section{Preliminary lemmas}

Throughout this article, $k$ denotes a valued field equipped with a nontrivial, nonarchimedean discrete valuation $\textup{val}:k\rightarrow \mathbb{Z}\cup \{\infty\}$. We equip $k$ with the absolute value  $$|x|=e^{-\textup{val}(x)}, \ x\in k.$$ The absolute value on $k$ satisfies the ultrametric inequality \begin{align}\label{ultrametric} \big|a_1+\cdots+a_r\big|\leq \max \big\{|a_1|,\ldots, |a_r|\big\} \ \  \forall \  a_1,\ldots,a_r \in k.\end{align} Recall that if $k$ is local, then it is isomorphic to a finite extension of either the $p$-adic numbers $\mathbb{Q}_p$ or the field of formal Laurent series $\mathbb{F}_q((\frac{1}{t}))$ over the finite field $\mathbb{F}_q$.

The vector space $k^n$ is equipped with the canonical basis $(e_1,\ldots,e_n)$ and the norm $||\cdot||:k^n\rightarrow \mathbb{R}_{+}$ defined as follows $$||a_1e_1+\cdots+a_ne_n||=\max_{1\leq j\leq n}|a_j|.$$ For any matrix $g=(g_{ij})_{i,j=1}^{n}$ with entries in $k$ its $\ell_{\infty}$-norm is $$||g||=\max_{||v||=1}||gv||=\max_{1\leq i,j\leq n}|g_{ij}|.$$

Denote by $\mathbb{P}(k^n)$ the projective space of $k^n$. For a unit vector $u\in k^n$ let $$B_{\epsilon}([u]):=\big\{[\omega]\in \mathbb{P}(k^n): ||\omega||=1, ||\omega-u||\leq \epsilon\big\}$$ be the ball of radius $\epsilon>0$ with center $[u]\in \mathbb{P}(k^n)$.  For two vectors $u=(u_1,\ldots,u_n)^\textup{t}\in k^n$,  $v=(v_1,\ldots,v_n)^\textup{t}\in k^n$ define their dot product as $$u\cdot v:=u_1v_1+\cdots+u_nv_n.$$ By the ultrametric property, the dot product  satisfies the Cauchy--Schwarz inequality $$|u\cdot v|\leq ||u||\cdot ||v||.$$ 

For a unit vector $v\in k^{n}$ consider the $(n-1)$-hyperplane $$v^{\perp}:=\big\{\omega \in k^n: \omega \cdot v=0\big\}$$ and define $\mathcal{N}_{\epsilon}(v^{\perp}):=\bigcup_{x \in \mathbb{P}(v^{\perp})}B_{\epsilon}(x)$ for any $\epsilon>0$.

 For $m \in \mathbb{N}$, the $m$-symmetric power of $k^n$ is the $k$-vector space,  $$\textup{Sym}^m k^n:=\bigoplus_{s_1+\cdots+s_n=m} k e_{1}^{s_1}\cdots e_{n}^{s_n}$$ and the representation $\textup{sym}^{m}:\mathsf{GL}_n(k) \rightarrow \mathsf{GL}(\textup{Sym}^{m}k^n\big)$ is defined as follows for $g \in \mathsf{GL}_n(k)$, $$\textup{sym}^m (g)\big(e_1^{s_1}\cdots e_{n}^{s_n}\big):=(ge_1)^{s_1}\cdots (ge_n)^{s_n}$$ for any basis vector $e_1^{s_1}\cdots e_n^{s_n}$ of $\textup{Sym}^m k^n$. Note that if $g\in \mathsf{SL}_n(k)$ then $\textup{det}(\textup{sym}^m(g))=1$ and $\textup{sym}^m(\mathsf{SL}_n(k))$ is a subgroup of $\mathsf{SL}(\textup{Sym}^mk^n)$.

For an unbounded subring $A$ of $k$ define $$v(A):=\min\{ |\textup{val}(a)|:a\in A, |a|>1\}.$$

\begin{lemma}\label{lemma} Let $A$ be an unbounded subring of $k$ and fix  integers $n\geq 2$, $m\geq n^2 v(A)$. There exists a faithful representation $\rho_{nm}:\mathsf{SL}_n(A)\rightarrow \mathsf{SL}_d(A)$, $d=\binom{n+m}{n}$, and vectors $v_1,v_2\in A^d$ such that for every $g\in \mathsf{SL}_d(A)$ with $||g||\geq 2|v_1\cdot v_2|$ the  bound holds $$| (\rho_{nm}(g)v_1-v_1)\cdot v_2|\geq  ||\rho_{nm}(g)||.$$

\end{lemma}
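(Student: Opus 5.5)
The plan is to build $\rho_{nm}$ as a symmetric power of the standard block embedding, and to pick $v_1,v_2$ so that the matrix coefficient $\rho_{nm}(g)v_1\cdot v_2$ is a polynomial in the entries of $g$ with one dominant monomial of size comparable to $\|\rho_{nm}(g)\|$. I read the hypothesis as $g\in\mathsf{SL}_n(A)$ and the norm condition as a condition on $\|\rho_{nm}(g)\|$ (or on $\|g\|$, which gives the same up to the obvious monotone relation).

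\emph{Construction.} Since $d=\binom{n+m}{n}=\dim \textup{Sym}^m k^{n+1}$, I would set $\rho_{nm}:=\textup{sym}^m\circ \iota$, where $\iota(g)=\textup{diag}(g,1)\in\mathsf{SL}_{n+1}(A)$. This is faithful, because $\textup{sym}^m$ has finite kernel consisting of scalars and $\iota(g)$ fixes $e_{n+1}$. It has entries in $A$, and $\|\rho_{nm}(g)\|=\|g\|^m$ by the ultrametric inequality together with the monomials $(ge_j)^m$. Fix $a\in A$ with $|\textup{val}(a)|=v(A)=:s$ and $|a|>1$. Put $w=\sum_{j=1}^n a^{j}e_j$ and $v_1=(w+e_{n+1})^m\in \textup{Sym}^mA^{n+1}$, so that
\[\rho_{nm}(g)v_1=(gw+e_{n+1})^m=\sum_{r=0}^m\binom{m}{r}(gw)^r e_{n+1}^{m-r}.\]
For $v_2$ I would take the dual vector of a single well-chosen basis monomial, for instance $e_1^{m}$ or a monomial $e_i^{m}$ weighted by a power of $a$ in each coordinate. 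Then $(\rho_{nm}(g)v_1)\cdot v_2$ is the corresponding coefficient, $\sum_{\sigma}c_\sigma\prod g_{i\sigma}a^{\sigma}$, where the $c_\sigma$ are multinomial integers. The difference $(\rho_{nm}(g)v_1-v_1)\cdot v_2$ differs from this coefficient only by the fixed scalar $v_1\cdot v_2$. The hypothesis $\|\rho_{nm}(g)\|\ge 2|v_1\cdot v_2|$ then lets me discard that constant by the ultrametric inequality once the coefficient is shown to be at least $\|g\|^m$ in absolute value.

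\emph{Key estimate.} I must show that among the monomials in the chosen coefficient, exactly one attains the maximal absolute value, and that this value is at least $\|g\|^m$. The weights $a^{j}$, together with a weight $a^{ni}$ on the dual side indexing rows, make the entry $g_{ij}$ appear with a factor of valuation $-s(ni+j)$. These shifts are distinct for distinct $(i,j)$ and spread over a range of length about $n^2s$. The hypothesis $m\ge n^2 v(A)$ is exactly what lets the $m$-th power separate integer valuations of entries from these shifts: among the products of $m$ entries, the one with the largest entry repeated $m$ times beats every mixed product by a factor that cannot be compensated by the bounded shifts. Having a unique maximum, the ultrametric inequality becomes an equality and yields the bound.

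\emph{Main obstacle.} The hardest part will be this uniqueness-of-maximum argument. Two points need care. First, integer valuations of the entries may tie, which I would handle by a lexicographic choice using the $a$-weights. Second, the multinomial coefficients $c_\sigma$ may have $|c_\sigma|<1$ in mixed or positive characteristic. I would try first to arrange that the dominant monomial has coefficient $1$, by extracting a coefficient of a pure power $e_i^m$, so that $c=1$. If that fails, I would fall back on choosing the $a$-weights so that each entry $g_{ij}$ is isolated in its own monomial.
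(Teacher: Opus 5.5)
Your $\rho_{nm}=\textup{sym}^m\circ\iota$ is the paper's representation. However, your vector $v_1=(w+e_{n+1})^m$ cannot work for any choice of $v_2$, so there is a genuine gap.

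Since $\rho_{nm}(g)v_1=(gw+e_{n+1})^m$ depends only on $gw$, every $g$ in the stabilizer of $w$ gives $(\rho_{nm}(g)v_1-v_1)\cdot v_2=0$. This stabilizer is unbounded in $\mathsf{SL}_n(A)$. With $\phi=ae_1-e_2\in A^n$ one has $\phi\cdot w=a^2-a^2=0$. Hence the transvections $g_\lambda=\textup{I}_n+\lambda\, w\phi^{\textup{t}}$, $\lambda\in A$, lie in $\mathsf{SL}_n(A)$ and fix $w$. They satisfy $\|g_\lambda\|=|\lambda|\,|a|^{n+1}\to\infty$, which contradicts the required bound.

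The same failure shows up in your key estimate. The $e_i^m$-coefficient of $\rho_{nm}(g)v_1$ is $\big(\sum_j g_{ij}a^j\big)^m$, the $m$-th power of a sum. Your claim that the pure power of the largest entry beats every mixed monomial is false. If two terms $g_{ij}a^j$ of a row have equal absolute value, then $x_1^m$ and $x_1^{m-1}x_2$ tie, and the inner sum can even vanish, as it does for $g_\lambda$. Raising to the $m$-th power after summing destroys the separation that $m\geq n^2v(A)$ is meant to give, and no lexicographic tie-breaking repairs that. Separately, pairing against the dual of a single monomial $e_1^m$ only sees the first row of $g$, which stays bounded for, say, lower unipotents with large entries.

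The missing idea is to take the $m$-th power of the entries rather than of the sum. The paper uses sums of pure powers of basis vectors: $v_1=\sum_j\pi^je_j^m$ and $v_2=\sum_i\pi^{ni}e_i^m$. The $e_i^m$-coefficient of $(ge_j)^m$ is exactly $g_{ij}^m$, with coefficient $1$ in every characteristic. Therefore
\[(\rho_{nm}(g)v_1)\cdot v_2=\sum_{i,j}g_{ij}^m\pi^{ni+j},\]
with no mixed monomials at all. Two of these terms could have equal absolute value only if $m$ divided the nonzero integer $(n(i-s)+j-\ell)v(A)$. Its absolute value is at most $(n^2-1)v(A)<m$, so this is impossible. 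The maximum is therefore attained exactly once, and the ultrametric equality gives $|(\rho_{nm}(g)v_1)\cdot v_2|\geq|\pi|^{n+1}\|g\|^m$.

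From there your handling of the constant $v_1\cdot v_2$ goes through unchanged. Your fallback of isolating each $g_{ij}$ in its own monomial points toward this construction, but as written it does not specify any vectors.
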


\begin{proof}  Let $j_n:\mathsf{SL}_n(k)\rightarrow \mathsf{SL}_{n+1}(k)$ be the embedding $$j_n(g)=\begin{pmatrix} 1 & \\ & g\end{pmatrix}, \  \ g\in \mathsf{SL}_n(k)$$ fixing the basis vector $e_{n+1}$ in the decomposition $k^{n+1}=ke_{n+1}\oplus k^n$. 

Consider the standard ordered basis of $\textup{Sym}^{m}k^{n+1}$ whose first $n$-tuple of vectors is $\big(e_1^m,\ldots,e_n^m\big)$ while the rest are of the form $\{e_{1}^{s_1}\cdots e_n^{s_{n+1}}:s_1+\cdots+s_{n+1}=m\}$. Thus, we obtain an identification $\textup{Sym}^{m}k^{n+1}\simeq k^{d}$, $d=\binom{n+m}{n}$, and a splitting $k^{d}=k^{n}\oplus V_{d}$, where $k^n=ke_1^m\oplus\cdots \oplus ke_n^m$ and $V_{d}\simeq k^{d-n}$. With respect to this decomposition of $k^d$, for the composition \hbox{$\rho_{nm}:\mathsf{SL}_n(k)\rightarrow \mathsf{SL}_d(k)$}, $\rho_{nm}:=\textup{sym}^m\circ j_{n}$, and any $g=(g_{ij})_{i,j=1}^{n}$ in  $\mathsf{SL}_{n}(k)$, $$\rho_{nm}(g)=\begin{pmatrix}[1.1] \mathcal{X}_{nm} (g) & \mathcal{Y}_{nm}(g) \\ \mathcal{Z}_{nm}(g)  & \mathcal{T}_{nm}(g)  \end{pmatrix}$$ where $\mathcal{X}_{nm}(g)$ (resp. $\mathcal{T}_{nm}(g)$) are $n\times n$ (resp. $(d-n)\times (d-n)$) matrices with \hbox{entries in $k$ and} $$\mathcal{X}_{nm}(g) :=\begin{pmatrix}[1.2] g_{11}^m & \cdots & g_{1n}^m \\ \vdots & \ddots & \vdots \\ g_{n1}^m & \cdots &g_{nn}^m \end{pmatrix}.$$

Since the kernel of the representation $\textup{sym}^m:\mathsf{SL}_n(k)\rightarrow \mathsf{SL}(\textup{Sym}^mk^n)$ contains only scalar multiples of the identity, $\rho_{nm}$ is a faithful representation and by the definition of $\rho_{nm}$, $\rho_{nm}(\mathsf{SL}_n(A))$ is a subgroup of $\mathsf{SL}_d(A)$. Fix $\pi\in A$ with $v(A)=\textup{val}(\pi)$ and define \begin{align}\label{vect1}v_1&:=\pi e_1+\pi^2e_2+\cdots +\pi^{n}e_n \in A^d\\ \label{vect2} v_2&:= \pi^n e_1+\pi^{2n}e_2+\cdots +\pi^{n^2}e_n\in A^d.\end{align} For every $g\in \mathsf{SL}_n(k)$,  $$ \rho_{nm}(g)v_1=\mathcal{X}_{nm}(g) v_1+\mathcal{Z}_{nm}(g) v_1=\sum_{i=1}^{n}\Bigg( \sum_{j=1}^{n}g_{ij}^m \pi^{j}\Bigg)e_i+\underbrace{\mathcal{Z}_{nm}(g) v_1}_{\in V_d}.
$$ By using the previous equality, for every $g\in \mathsf{SL}_n(k)$, \begin{align}\label{equation2} |\rho_{nm}(g)v_1\cdot v_2 |=\Bigg|\sum_{i,j=1}^{n}g_{ij}^m\pi^{ni+j} \Bigg| =\Bigg|\sum_{(i,j):g_{ij}\neq 0}g_{ij}^m\pi^{ni+j} \Bigg|.\end{align} Observe that when $1\leq i,j,s,\ell \leq n$, $(i,j)\neq (s,\ell)$ and $g_{ij}g_{s\ell}\neq 0$, then \begin{align*}\frac{|\pi|^{ni+j}}{{|\pi|^{ns+\ell}}}&=e^{(n(i-s)+j-\ell)v(A)}\neq 1.\end{align*} Since $m\geq n^2 v(A)$ cannot divide the non-zero integer $(n(i-s)+j-\ell)v(A)$ (the latter is in $\{(1-n^2)v(A),\ldots, (n^2-1)v(A)\}$), $\pm \textup{val}(\frac{g_{s\ell}}{g_{ij}})m \neq (n(i-s)+j-\ell)v(A)$ and hence $$|\pi^{ni+j}g_{ij}^m|\neq |\pi^{ns+\ell}g_{s\ell}^m|,$$ \hbox{when $(i,j)\neq (s,\ell)$ and $g_{ij}g_{s\ell}\neq 0$. Since the absolute value $|\cdot|$ on $k$ is an ultrametric,  (\ref{equation2}) shows} \begin{align*}|(\rho_{nm}(g)v_1)\cdot v_2|=\max_{1\leq i,j\leq n} |\pi^{ni+j}g_{ij}^m|&\geq |\pi|^{n+1} \max_{1\leq i,j\leq n}|g_{ij}|^m=|\pi|^{n+1} ||g||^m.\end{align*}  By the definition of the $m$-symmetric power $\textup{sym}^m$ and the choice of the base of $\textup{Sym}^mk^{n+1}$,  $$| |\textup{sym}^m(j_{n}(g))| |=||\rho_{nm}(g)||=||g||^m, \ g\in \mathsf{SL}_n(k).$$ Thus, for any $g\in \mathsf{SL}_n(k)$ the following bound holds \begin{align}\label{lem1-mainbd} | (\rho_{nm}(g)v_1)\cdot v_2|\geq |\pi|^{n+1} ||\rho_{nm}(g)||.\end{align} 
 For every $g\in \mathsf{SL}_n(k)$ with $||g||\geq 2|v_1\cdot v_2|$, (\ref{lem1-mainbd}) implies $$|(\rho_{nm}(g)v_1-v_1)\cdot v_2|\geq |\pi|^{n+1} ||g||^m-|v_1\cdot v_2|\geq  ||g||^m= | |\rho_{nm}(g)| |.$$ Finally, the restriction of $\rho_{nm}$ on $\mathsf{SL}_n(A)$ satisfies the properties of the lemma. \end{proof}

For $e_i\in k^n$ recall that $e_i^{\perp}=\textup{span}\{e_j:j\neq i\}$. We will need the following folklore lemma.

\begin{lemma}\label{order2} Let $A$ be an unbounded subring of $k$. For any $0<\epsilon<1$ and $T>1$, there is $w\in \mathsf{SL}_r(A)$, $r\geq 3$, with the property that $$w^{\pm 1} \big(\mathbb{P}(k^r)\smallsetminus \mathcal{N}_{\epsilon}(e_{r}^{\perp})\big)\subset B_{\epsilon}([e_1])$$ and also $||w^{\pm 1} \omega ||\geq T||\omega||$ for every $[\omega]\in \mathbb{P}(k^r)\smallsetminus \mathcal{N}_{\epsilon}(e_r^{\perp})$.\end{lemma}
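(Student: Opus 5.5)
We will construct $w$ explicitly as an "order two up to sign" element swapping $e_1$ and $e_r$, composed with a large diagonal element. Fix $a\in A$ with $|a|>1$ (possible since $A$ is unbounded) and let $N\geq 1$ be an integer to be chosen. Set $\lambda:=a^N$ and consider
$$w:=\begin{pmatrix} 0 & 0 & \lambda \\ 0 & I_{r-2} & 0\\ -\lambda^{-1}\cdot 0 & 0 & 0\end{pmatrix}$$
type matrices. This naive form requires $\lambda^{-1}\in A$, which need not hold. So instead we will take a unipotent product. Let $u:=I+\lambda E_{1r}\in \mathsf{SL}_r(A)$ and $u':=I+\lambda E_{r1}$, and put $w:=u\,u'\,u$ or simply $w:=u$; we check what is needed.

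\textbf{Choosing $w=u$.} For $\omega=(\omega_1,\ldots,\omega_r)$ with $\|\omega\|=1$ and $[\omega]\notin\mathcal{N}_\epsilon(e_r^\perp)$, we first show $|\omega_r|>\epsilon$. Indeed, otherwise $\omega':=\omega-\omega_re_r\in e_r^\perp$ has $\|\omega'\|=1$ (ultrametric, since some other coordinate has absolute value $1$) and $\|\omega-\omega'\|\leq\epsilon$, so $[\omega]\in B_\epsilon([\omega'])$. Then $u^{\pm1}\omega=\omega\pm\lambda\omega_r e_1$, whose first coordinate $\omega_1\pm\lambda\omega_r$ has absolute value $|\lambda||\omega_r|>|\lambda|\epsilon$ once $|\lambda|\epsilon>1$, while all other coordinates have absolute value at most $1$. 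Hence $\|u^{\pm1}\omega\|=|\lambda\omega_r|\geq |\lambda|\epsilon\|\omega\|$, giving the norm bound once $|\lambda|\epsilon\geq T$. Normalizing, $u^{\pm1}\omega/(\omega_1\pm\lambda\omega_r)$ equals $e_1$ plus a vector with entries of absolute value at most $1/(|\lambda|\epsilon)$. To land in $B_\epsilon([e_1])$ in the paper's sense we need a unit representative whose distance to $e_1$ is at most $\epsilon$; rescaling by a unit of $k$ to make the first coordinate exactly $1$ (allowed since that coordinate is a unit after division) gives distance $\leq 1/(|\lambda|\epsilon)\leq\epsilon$ provided $|\lambda|\geq\epsilon^{-2}$.

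So choose $N$ with $|a|^N\geq\max\{T/\epsilon,\epsilon^{-2},2/\epsilon\}$ and $w:=I+a^NE_{1r}$; this lies in $\mathsf{SL}_r(A)$ and handles both $w$ and $w^{-1}=I-a^NE_{1r}$ simultaneously. The only delicate point, and the step I would check most carefully, is the first one: translating $[\omega]\notin\mathcal{N}_\epsilon(e_r^\perp)$ into the coordinate bound $|\omega_r|>\epsilon$, which depends on the precise convention for the balls (unit representatives, and choice of scalar multiple). Since $B_\epsilon([u])$ is defined via some unit representative, I would argue with the representative achieving the minimal distance, using that multiplying by units of norm $1$ does not change the ultrametric estimates. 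The hypothesis $r\geq3$ is not needed for this construction, but is harmless.
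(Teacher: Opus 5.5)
Your proof is correct. It differs from the paper's only in the choice of element; the verification is the same.

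The paper takes the involution $w_a$ with $|a|\geq\epsilon^{-6}T$. It acts by $\begin{pmatrix} a & a^2-1\\ -1 & -a\end{pmatrix}$ on $\textup{span}\{e_1,e_r\}$, by $-1$ on $e_2$, and trivially on the remaining basis vectors. Since $w_a^2=\textup{I}_r$, the claims for $w^{-1}$ are the claims for $w$. The estimate is the same ultrametric domination you use: the first coordinate $(a^2-1)\omega_r+a\omega_1$ has absolute value $|a^2\omega_r|$, which dominates all the others. The $-1$ on $e_2$ only cancels the determinant $-1$ of the $2\times 2$ block, and it is the sole reason for the hypothesis $r\geq 3$.

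Your transvection $\textup{I}_r+a^NE_{1r}$ fixes $e_r^{\perp}$ pointwise and pushes everything off that hyperplane towards $[e_1]$. It lies in $\mathsf{SL}_r(A)$ with no correction, and its inverse has the same shape. So one computation covers both signs, and the lemma holds for every $r\geq 2$. The paper's choice buys only that $w^{\pm1}$ is a single matrix. Proposition~\ref{theorem-1} uses nothing beyond the two stated properties and $w\in\mathsf{SL}_{d+1}(A)$, so your element can be used there unchanged.

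You also justify the reduction $|\omega_r|>\epsilon$ from the definition of $\mathcal{N}_\epsilon(e_r^\perp)$, which the paper only asserts.

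For a clean write-up, delete the abandoned first attempts: the ill-formed matrix with entry $-\lambda^{-1}\cdot 0$, and the product $uu'u$. Also note that after dividing by $\omega_1\pm\lambda\omega_r$ the first coordinate is already exactly $1$, so no further rescaling is needed.
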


\begin{proof} For any $\omega=\omega_1e_1+\cdots +\omega_r e_r\in k^r$ with $||\omega||=1$ and $[\omega]\in \mathbb{P}(k^r)\smallsetminus \mathcal{N}_{\epsilon}(e_{r}^{\perp})$ we have  $|\omega_r|\geq \epsilon$. Fix $a\in A$ with $|a|\geq {\epsilon^{-6}}T$ and consider the order $2$ matrix $w_a\in \mathsf{SL}_r^{\pm}(A)$, $$w_a:=\begin{pmatrix} a & & & a^2-1 \\  & -1 & & \\ & &  \textup{I}_{r-3}  &  \\ -1  & & & -a \end{pmatrix}.$$  Then observe that \begin{align*}\frac{w_a \omega}{(a^2-1)\omega_r+a\omega_1}&= e_1-\frac{\omega_2}{(a^2-1)\omega_r+a\omega_1}e_2+\cdots-\frac{\omega_1+a\omega_r}{(a^2-1)\omega_r+a\omega_1}e_r,\\ | |w_{a}\omega||&=\big|(a^2-1)\omega_r+a\omega_1\big|=|a^2\omega_r|\geq T, \ \ \textup{hence}\end{align*}$$\Bigg|\Bigg| \frac{w_a\omega}{(a^2-1)\omega_r+a\omega_1}-e_1\Bigg|\Bigg|\leq \frac{|a|}{|a^2\omega_r|}\leq \epsilon$$ and $[w_a\omega]\in B_{\epsilon}([e_1])$. The lemma follows.\end{proof}

 For a finitely generated group $\Delta$ a representation $\rho:\Delta\rightarrow \mathsf{SL}_n(k)$ is a {\em quasi-isometric embedding} if there exist $c,a>0$ such that for every $g\in \Delta$, $$||\rho(g)||\cdot ||\rho(g^{-1})||\geq  e^{a|g|_{\Delta}-c}$$ where $|g|_{\Delta}$ denotes the word length of $g$ with respect to a fixed word metric on $\Delta$. In particular, if $\mathcal{BT}_n$ denotes the the Bruhat--Tits building of $\mathsf{SL}_n(k)$, equipped with the apartment metric $\mathsf{d}_{\mathcal{A}}$, the orbit map $(\Delta, |\cdot|_{\Delta})\rightarrow (\mathcal{BT}_n,\mathsf{d}_{\mathcal{A}})$, $\gamma \mapsto \rho(\gamma)x_0$ (where $x_0\in \mathcal{BT}_n$ is a fixed basepoint\footnote{For $g\in \mathsf{SL}_n(k)$, $|d_{\mathcal{A}}(gx_0,x_0)-\log(||g||\cdot ||g^{-1}||)|$ is bounded above  by a constant independent of $g$.}) is a quasi-isometric embedding of metric spaces. A subgroup $\Delta$ of $\mathsf{SL}_n(k)$ is quasi-isometrically embedded if it is finitely generated and its inclusion in $\mathsf{SL}_n(k)$ is a quasi-isometric embedding.

We also need the following ping-pong combination of discrete subgroups of $\mathsf{SL}_d(k)$.

\begin{proposition}\label{theorem-1} Let $A$ be an unbounded subring of $k$ and $v_1,v_2\in A^d$ defined in (\ref{vect1}) and (\ref{vect2}) respectively. Let $\Gamma_1,\Gamma_2$ be subgroups of $\mathsf{SL}_d(A)$ such that for every $g\in \Gamma_i\smallsetminus \{\textup{I}_d\}$, \begin{align}\label{ineq1-thm1}|(\gamma v_1-v_1)\cdot v_2|\geq ||\gamma||.\end{align} There exists $w\in \mathsf{SL}_{d+1}(A)$ such that the subgroup of $\mathsf{SL}_{d+1}(A)$, \begin{align}\label{gengp} \Bigg\langle \begin{pmatrix} 1& \\ & \Gamma_1  \end{pmatrix}, w\begin{pmatrix} 1& \\ & \Gamma_2  \end{pmatrix}w^{-1} \Bigg\rangle\end{align} is a discrete subgroup of $\mathsf{SL}_{d+1}(k)$ isomorphic to $\Gamma_1\ast \Gamma_2$. Moreover, if $\Gamma_1$ and $\Gamma_2$ are metrically proper \textup{(}resp. quasi-isometrically embedded\textup{)} in $\mathsf{SL}_d(k)$, then the group defined in (\ref{gengp}) is also metrically proper \textup{(}resp. quasi-isometrically embedded\textup{)}. \end{proposition}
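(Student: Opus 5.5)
The plan is a ping-pong argument in $\mathbb{P}(k^{d+1})$, written $k^{d+1}=ke_0\oplus k^d$. It uses the quantitative bound (\ref{ineq1-thm1}) to get expansion on one side and Lemma \ref{order2} for the other side.

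\textbf{Step 1: make $\Gamma_1$ act with a controlled first column.} For $\gamma\in\mathsf{SL}_d(A)$ put $\hat\gamma=\mathrm{diag}(1,\gamma)$ and let $u=\begin{pmatrix}1&0\\ v_1&\mathrm{I}_d\end{pmatrix}\in\mathsf{SL}_{d+1}(A)$. A direct computation gives
$$u^{-1}\hat\gamma u=\begin{pmatrix}1&0\\ \gamma v_1-v_1&\gamma\end{pmatrix}.$$
Pairing the first column with $v_2$ gives a scalar of absolute value at least $\|\gamma\|$ by (\ref{ineq1-thm1}). I will therefore work with an invertible change of basis $P$ over $A$ whose last coordinate functional is $x\mapsto x\cdot v_2$ on $k^d$. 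After conjugating by $uP$, every $\gamma\in\Gamma_1\smallsetminus\{\mathrm{I}_d\}$ is a matrix $g_\gamma$ with the following property: the coefficient of $e_0$ in $g_\gamma e_0$ along one fixed coordinate $e_r$ has absolute value $\ge\|\gamma\|$, while $\|g_\gamma\|\asymp\|\gamma\|$ up to constants depending only on $v_1,v_2$. Since $\mathrm{I}_d\ne\gamma\in\mathsf{SL}_d(A)$ has $\|\gamma\|\ge 1$, this defines two ping-pong sets. The first is $U_1=B_\epsilon([e_0])$. The second, $U_2$, is the set of lines far from the hyperplane $e_r^\perp$. Every $\gamma\ne \mathrm{I}$ in $\Gamma_1$ should send $U_1$ into the complement of $\mathcal N_\epsilon(e_r^\perp)$. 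It should also multiply the norm of vectors in $U_1$ by at least a constant times $\|\gamma\|$, because the $e_r$-component dominates after normalization.

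\textbf{Step 2: the conjugating element and the ping-pong.} Take $w$ from Lemma \ref{order2}, after a coordinate permutation that makes its roles of $e_1$ and $e_r$ match $e_0$ and the $v_2$-coordinate. Then $w^{\pm1}$ maps $\mathbb P(k^{d+1})\smallsetminus\mathcal N_\epsilon(e_r^\perp)$ into $B_\epsilon([e_0])$ and expands norms by $T$. In the conjugated copy $w\hat\Gamma_2 w^{-1}$, a nontrivial element therefore acts as $w$, followed by an element of $\Gamma_2$ that pushes $U_1$ away from $e_r^\perp$, followed by $w^{-1}$ returning the line to $U_1$. Here I use that $w$ has order $2$, so conjugating by $w$ agrees with conjugating by $w^{-1}$. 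Composing these moves along a reduced word $g=\alpha_1\beta_1\cdots$ keeps a test line inside $U_1$ or $U_2$ and never returns it to the starting configuration. This gives injectivity of $\Gamma_1\ast\Gamma_2\to\mathsf{SL}_{d+1}(A)$. It also gives a norm bound
$$\|g\,x\|\ \ge\ c^{\ell}\,T^{\#\beta}\prod\|\alpha_i\|\prod\|\beta_j\|\,\|x\|\qquad\text{for } x\in U_1,$$
where $\ell$ is the syllable length. Choosing $T$ larger than the loss $c^{-2}$ per syllable makes every nontrivial element expand some unit vector by at least $T/c^2>1$.

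\textbf{Step 3: discreteness, properness, and quasi-isometric embedding.} The main obstacle is discreteness. Norms of elements of $\mathsf{SL}_{d+1}(A)$ are always at least $1$, so a lower bound on norm does not by itself separate elements from the identity. I would first handle elements that are nontrivial reduced words of syllable length $\ge 2$, or nontrivial single syllables. For these the expansion estimate of Step 2 shows that $g$ moves $B_\epsilon([e_0])$ into a disjoint ball, or into the region off $\mathcal N_\epsilon(e_r^\perp)$. This should force $\|g-\mathrm I\|\ge\epsilon'$ for a uniform $\epsilon'>0$. Single syllables lie in conjugates of $\Gamma_i$ by fixed matrices, where discreteness is inherited from the hypothesis. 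If the projective argument is too weak, I would instead use the first-column quantity from Step 1, which has absolute value $\ge 1$ for every nontrivial syllable. Metric properness follows from the product norm bound: only finitely many syllable choices have bounded norms, given properness of each $\Gamma_i$. The quasi-isometric case follows by combining that bound with $\|\gamma\|\,\|\gamma^{-1}\|\ge e^{a|\gamma|-c}$ in each factor and applying the same bound to $g^{-1}$. Since reduced words in the free product have additive word length, the constants absorb into a uniform $c$ once $T$ is large.
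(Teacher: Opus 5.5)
Your Steps 1 and 2 follow the paper's argument. Your $uP$ plays the role of the paper's $h$: indeed $ue_0=(1,v_1)^{\mathrm t}$ and $(u^{-1})^{\mathrm t}(0,v_2)^{\mathrm t}=(-v_1\cdot v_2,v_2)^{\mathrm t}$. The two ping-pong regions and the use of Lemma \ref{order2} are the paper's, your product estimate is the analogue of (\ref{final-bd}), and properness and the quasi-isometric case follow from it as you describe. (A minor point: if $\pi$ is not a unit of $A$, no $P$ invertible over $A$ has $x\mapsto x\cdot v_2$ as a coordinate, since all entries of $v_2$ lie in $\pi^nA$. Use the unimodular vector $\pi^{-n}v_2$ instead, which only changes constants.)

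The gap is in Step 3, for reduced words of even syllable length. These map $U_1$ into $U_1$, or $U_2$ into $U_2$. For instance, take a word whose rightmost syllable comes from $\Gamma_1$ and whose leftmost syllable comes from the $w$-conjugated copy of $\Gamma_2$. It sends $U_1$ into $w(U_2)\subset B_\epsilon([e_0])$, a ball centred at $[e_0]$ itself. So your claim that such $g$ moves $B_\epsilon([e_0])$ into a disjoint ball, or off $\mathcal N_\epsilon(e_r^\perp)$, is false for these words. Your fallback via the first-column quantity only controls single syllables, so as written discreteness is not proved.

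The missing idea is the one you explicitly set aside: over a nonarchimedean field a norm bound \emph{does} separate elements from the identity. If $\|g-\mathrm I\|<1$, the ultrametric inequality gives $\|gx\|=\|x\|$ for every $x$; in particular $\|g\|=1$. Conversely, if $\|gx\|>\|x\|$ for some $x$, then $\|(g-\mathrm I)x\|=\|gx\|>\|x\|$, so $\|g-\mathrm I\|>1$.

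Your Step 2 estimate, with $T$ chosen large enough, produces such an $x$ for every reduced word of syllable length $\geq 2$. Hence all these words lie outside a fixed neighbourhood of $\mathrm I$. Single syllables are handled by
\[
\|\gamma-\mathrm I_d\|\geq\|\gamma\|/(\|v_1\|\,\|v_2\|),
\]
which follows from (\ref{ineq1-thm1}) and the Cauchy--Schwarz inequality, and is transported through the fixed conjugating matrices. This is how the paper concludes: the elements $\eta$-close to the identity all have norm $1$, while (\ref{final-bd}) gives norm at least $\theta^{-5}$ to every word of syllable length $\geq 2$.
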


\begin{proof} Recall that there is $\pi\in A$ such that $v_1=\pi e_1+\cdots+\pi^ne_n, v_2=\pi^{n}e_1+\cdots+\pi^{n^2}e_n$ are defined in (\ref{vect1}) and (\ref{vect2}) respectively. 
We first claim there is $h\in \mathsf{SL}_{d+1}(A)$ such that \begin{align*}[(1,v_1)^\textup{t}]&=[he_1],\\ [(-v_1\cdot v_2,v_2)^\textup{t}]&=[(h^{-1})^\textup{t} e_{d+1}].\end{align*}

To see this, consider $\sigma_1,\sigma_2\in \mathsf{SL}(A\oplus A^d)$ defined as follows $$ \sigma_1:=\begin{pmatrix}
1 &  &  &  &  & \\
-\pi & 1  &  &  &  & \\
-\pi^2 & 0 & 1 &  &  \\
\vdots & \vdots & \vdots  &\ddots   & &  \\
 -\pi^n& 0 & 0 & \cdots  & 1 \\
 &  &  &  &  &  \textup{I}_{d-n} \\
\end{pmatrix},\  \sigma_2:=\begin{pmatrix}
1 &  0 & 0& \cdots & 0 & \\
 & 1 & \pi^{n} & \cdots & \pi^{n(n-1)} & \\
 &  & 1 & \cdots &0 &  \\
 &   &  & \ddots  & \vdots &  \\
 &  &  &  & 1 & \\
 &  &  &  &  &  \textup{I}_{d-n} \\
\end{pmatrix} $$ with respect to the ordered basis $(e_{d+1},e_1,\ldots, e_n,\ldots,e_d)$ of $A\oplus A^d$.  By the definition of $v_1,v_2\in A^d$ we have  $v_1\cdot v_2=\pi^{n+1}+\pi^{2(n+1)}+\cdots +\pi^{n(n+1)}$ and also \begin{align*}[\sigma_2\sigma_1 (1,v_1)^\textup{t} ] &=[e_{d+1}]\\ 
[(\sigma_2^{-1})^{\textup{t}}(\sigma_1^{-1})^\textup{t} (-v_1\cdot & v_2,v_2)^\textup{t} ] =[e_1].\end{align*} Thus, the claim follows by taking the matrix $h \in \mathsf{SL}(A\oplus A^d)\simeq \mathsf{SL}_{d+1}(A)$, $$h:=\sigma_1^{-1}\sigma_2^{-1}\begin{pmatrix} & -1 & \\ 1 & &\\ & & \textup{I}_{d-1}\end{pmatrix}.$$

Now consider the embedding $j_d:\mathsf{SL}_d(k)\rightarrow \mathsf{SL}_{d+1}(k)$, $$j_d(g):=\begin{pmatrix}1 & \\ & g\end{pmatrix},  \ g\in \mathsf{SL}_d(k).$$ 
Given $h\in \mathsf{SL}_{d+1}(A)$, for every $g\in \Gamma_i\smallsetminus \{\textup{I}_d\}$ \hbox{we have that} \begin{align}\nonumber \frac{|(h^{-1}j_d(g)he_1)\cdot e_{d+1}|}{||j_d(g)||}&=\frac{|(j_d(g)he_1)\cdot ((h^{-1})^te_{d+1})|}{||j_d(g)||}=\frac{|(j_d(g) (1,v_1)^{\textup{t}})\cdot (-v_1\cdot v_2,v_2)^{\textup{t}}|}{||j_d(g)||}\\
\label{inclusion-a} &= \frac{|(g v_1-v_1)\cdot v_2|}{||g||}\geq 1.\end{align}  Fix any  $0<\theta< (10||h||\cdot ||h^{-1}||)^{-2}$. By (\ref{inclusion-a}), for any $g\in \Gamma_i\smallsetminus \{\textup{I}_d\}$, \begin{align}\label{inclusion1}\big(h^{-1}j_d(g)h\big)B_{\theta}([e_1])\subset \mathbb{P}(k^{d+1})\smallsetminus \mathcal{N}_{\theta}\big(e_{d+1}^{\perp}\big)\\ 
\label{inclusion-b} | |(h^{-1}j_d(g)h)\omega| |\geq \theta | |j_d(g)| |\cdot ||\omega||, \ \forall \ [\omega]\in B_{\theta}([e_1]).\end{align}

By Lemma \ref{order2}, given $\theta>0$, there is $w\in \mathsf{SL}_{d+1}(A)$ such that \begin{align}\label{inclusion3''} w^{\pm 1}\Big(\mathbb{P}(k^{d+1})\smallsetminus \mathcal{N}_{\theta}(e_{d+1}^{\perp})\Big)&\subset  B_{\theta}([e_1])\\ \label{inclusion3'''} ||w^{\pm 1} \omega||\geq \theta^{-7} ||\omega||\end{align} for every $[\omega]\in \mathbb{P}(k^{d+1})\smallsetminus \mathcal{N}_{\theta}(e_{d+1}^{\perp})$. By combining (\ref{inclusion1}), (\ref{inclusion-b}), (\ref{inclusion3''}) and (\ref{inclusion3'''}), we have \begin{align}
\label{inclusion3} (wh) j_d(\gamma_2)(wh)^{-1} \big(\mathbb{P}(k^{d+1})\smallsetminus \mathcal{N}_{\theta}(e_{d+1}^{\perp})\big)&\subset B_{\theta}([e_1]), \ \forall \  \gamma_2\in \Gamma_2\smallsetminus \{\textup{I}_d\}\\ \nonumber | |(wh) j_d(\gamma_2)(wh)^{-1} \omega | |&\geq \theta^{-7}| |(hj_d(\gamma_2)h^{-1})w^{-1}\omega| |\\ & \nonumber \geq \theta^{-6}| |j_d(\gamma_2)| |\cdot | |w^{-1}\omega| |\\ \label{inclusion3'} &\geq \theta^{-13} | |j_d(\gamma_2)| |\cdot ||\omega||\end{align} for any $[\omega]\in \mathbb{P}(k^{d+1})\smallsetminus \mathcal{N}_{\theta}(e_{d+1}^{\perp})$ and $\gamma_2\in \Gamma_2\smallsetminus \{\textup{I}_d\}$.

Now let $\psi_{d}:\Gamma_1\ast \Gamma_2\rightarrow \mathsf{SL}_d(k)$ be the unique representation satisfying $$\psi_d(\gamma):= \left\{\begin{matrix}
hj_d(\gamma)h^{-1} & \gamma\in \Gamma_1  \\
(wh)j_d(\gamma)(wh)^{-1} & \gamma\in \Gamma_2  \\
\end{matrix}\right..$$ Clearly, as $wh\in \mathsf{SL}_{d+1}(A)$, $\psi_d(\Gamma_1\ast \Gamma_2)$ is a subgroup of $\mathsf{SL}_{d+1}(A)$. By using the ping-pong inclusions (\ref{inclusion1}), (\ref{inclusion3}) and the bounds  (\ref{inclusion-a}), (\ref{inclusion-b}) and (\ref{inclusion3'}), for any reduced word $g_1\cdots g_{\ell}\in \Gamma_1\ast \Gamma_2$, where $g_i \neq \textup{I}_n$, we obtain the bound \begin{align}\label{final-bd} || \psi_d(\gamma_1\cdots \gamma_{\ell}) | |\geq \theta^{\frac{13(1-\ell)}{2}} \theta^{\frac{\ell+1}{2}} ||j_d(\gamma_1)| |\cdots | |j_d(\gamma_{\ell}) | |=  \theta^{7-6\ell} | |\gamma_1| |\cdots | |\gamma_{\ell}| |.\end{align} 

It remains to verify that $\psi_d$ has discrete image. First, note by (\ref{ineq1-thm1}) that \begin{align}\label{G12-discrete}\frac{||\gamma-\textup{I}_d||}{||\gamma||}\geq \frac{1}{||v_1||\cdot ||v_2||} \ \ \forall \ \gamma \in \Gamma_i \smallsetminus \{\textup{I}_d\}.\end{align}  Let us set \begin{align*}\eta:&= \big((1+||v_1||\cdot ||v_2||)\cdot ||h||\cdot ||h^{-1}||\cdot ||w||\cdot ||w^{-1}||\big)^{-1},\\ \mathcal{S}&:=\big\{g\in \Gamma_1\ast \Gamma_2: ||\psi_d(g)-\textup{I}_{d+1}||\leq \eta\big\}.\end{align*} By (\ref{final-bd}),  $\mathcal{S}$ is a subset of $\Gamma_1\cup \Gamma_2$ since any element of $(\Gamma_1\ast \Gamma_2)\smallsetminus \Gamma_1\cup \Gamma_2$ has norm at least  $\frac{1}{\theta^5}$, while any matrix in $\mathcal{S}$ has norm equal to $1$. If $||\psi_d(\gamma)-\textup{I}_{d+1}||\leq \eta$ for some $\gamma \in \Gamma_1\cup \Gamma_2$, by the sub-multiplicativity of the matrix norm it follows that $$| |\gamma-\textup{I}_{d}| |\leq \big(||w||\cdot ||w^{-1}||\cdot ||h||\cdot ||h^{-1}||\big) | |\psi_d(\gamma)-\textup{I}_{d+1} | |\leq \frac{1}{1+||v_1||\cdot ||v_2||}.$$ Thus  (\ref{G12-discrete}) implies $\mathcal{S}=\{\textup{I}_{d+1}\}$ and $\psi_d$ is a discrete and faithful representation.

Now we assume that $\Gamma_1$ and $\Gamma_2$ are metrically proper subgroups of $\mathsf{SL}_d(k)$. Fix $L>1$. By (\ref{final-bd}), we can check that $\{\gamma\in \Gamma_1\ast \Gamma_2: ||\psi_d(\gamma)||\leq L\}$ is contained in $$\Bigg\{\gamma_{1}\cdots \gamma_\ell: \gamma_i\in \Gamma_1\cup \Gamma_2, ||\gamma_1||\leq \frac{L}{\theta},\ldots, ||\gamma_\ell ||\leq \frac{L}{\theta}, 1\leq \ell \leq \frac{7}{6}-\frac{\log L}{6\log\theta} \Bigg\}.$$ The latter set has cardinality at most equal to $$\Big|\Big\{\gamma \in \Gamma_1\cup \Gamma_2: ||\gamma||\leq \frac{L}{\theta}\Big\}\Big|^{\frac{7}{6}-\frac{1}{6}\frac{\log L}{\log\theta}}$$ and $\psi_d(\Gamma_1\ast\Gamma_2)$ is metrically proper.

Now assume that $\Gamma_1,\Gamma_2$ are quasi-isometrically embedded in $\mathsf{SL}_d(k)$ and fix a word length function $|\cdot|:\Gamma_1\ast \Gamma_2\rightarrow \mathbb{N}$ induced by a choice of word metrics on $\Gamma_i$. There are $c,a>0$ with $$||\gamma||\cdot ||\gamma^{-1}||\geq e^{a|\gamma|-c} \ \ \forall \ \gamma \in \Gamma_i$$ and choose $0<\theta<(10e^c||h||\cdot||h^{-1}||)^{-2}$ as above before (\ref{inclusion1}). \hbox{By (\ref{final-bd}) and the choice of $\theta>0$,} $$|| \psi_d(\gamma_1\cdots \gamma_{\ell}) | |\geq  \theta^7 e^{a|\gamma_1|+\cdots +a|\gamma_\ell|}\geq \theta^7 e^{a|\gamma_1\cdots \gamma_\ell|}$$ for any reduced product $\gamma_1\cdots \gamma_\ell\in \Gamma_1\ast \Gamma_2$. Thus, in this case $\psi_d$ is a quasi-isometric embedding and the lemma follows.\end{proof}

\section{Proof of Theorem \ref{mainthm-1}}\label{proofs}

Recall that $k$ is a discretely valued field and for an unbounded subring $A$ of $k$, $v(A)=\textup{min}\{|\textup{val}(a)|:a \in A, |a|>1\}$. For $\nu \in k$ denote by $\iota_{\nu}:\mathsf{SL}_\ell (k)\rightarrow \mathsf{SL}_{2\ell}(k)$ the faithful representation defined as follows, \begin{align}\label{def-iota}\iota_{\nu}(g): =\begin{pmatrix} \textup{I}_\ell & \nu (g- \textup{I}_\ell)\\ &  g  \end{pmatrix}, \ g\in \mathsf{SL}_{\ell}(k).\end{align} We now prove the following theorem from which Theorem \ref{mainthm-1} follows readily.

\begin{theorem}\label{mainthm-1'} Let $A$ be an unbounded subring of $k$ and $\Gamma_1,\Gamma_2$ be discrete \textup{(}resp. metrically proper or quasi-isometrically embedded\textup{)} subgroups of $\mathsf{SL}_\ell(k)$, $\ell \geq 2$, contained in $\mathsf{SL}_\ell(A)$. The free product $\Gamma_1\ast \Gamma_2$ embeds in $\mathsf{SL}_{d+1}(A)$ as a discrete \textup{(}resp. metrically proper or quasi-isometrically embedded\textup{)} subgroup of $\mathsf{SL}_{d+1}(k)$, $d=\binom{4\ell^2v(A)+2\ell}{2\ell}$.\end{theorem}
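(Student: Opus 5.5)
The plan is to reduce to Proposition \ref{theorem-1}. Lemma \ref{lemma} has a defect for this purpose: its conclusion only holds for elements of large norm ($||g||\geq 2|v_1\cdot v_2|$). Elements of small norm, for instance bounded elements of a discrete group, are not controlled. The role of $\iota_\nu$ is to fix this.

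First we pass from $\Gamma_i$ to $\iota_\nu(\Gamma_i)\subset \mathsf{SL}_{2\ell}(A)$, with $\nu\in A$ chosen so that $|\nu|$ is large. For $g\in \Gamma_i\smallsetminus\{\textup{I}_\ell\}$ we have $||\iota_\nu(g)||=\max\{||g||,|\nu|\,||g-\textup{I}_\ell||\}$. Discreteness of $\Gamma_i$ gives a uniform $\delta>0$ with $||g-\textup{I}_\ell||\geq\delta$ for every nontrivial $g$. Taking $|\nu|\delta\geq 2|v_1\cdot v_2|$, where $v_1,v_2$ are the vectors from Lemma \ref{lemma} in dimension $n=2\ell$, every nontrivial element of $\iota_\nu(\Gamma_i)$ has norm at least $2|v_1\cdot v_2|$. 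This threshold depends only on $n$ and $\pi$, not on $\nu$, so the choice is not circular. The map $\iota_\nu$ is an injective homomorphism. Since $||\iota_\nu(g)||\geq ||g||$ and also $||\iota_\nu(g)||\leq \max(1,|\nu|)\,||g||$ up to a constant, metric properness and quasi-isometric embedding pass to $\iota_\nu(\Gamma_i)$: the norms are comparable up to a multiplicative constant, and $||\iota_\nu(g)^{-1}||=||\iota_\nu(g^{-1})||$.

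Next we apply Lemma \ref{lemma} with $n=2\ell$ and $m=n^2v(A)=4\ell^2 v(A)$. This gives $\rho_{nm}:\mathsf{SL}_{2\ell}(A)\to \mathsf{SL}_d(A)$ with $d=\binom{2\ell+4\ell^2v(A)}{2\ell}$, which matches the statement. For every nontrivial $\gamma\in\rho_{nm}(\iota_\nu(\Gamma_i))$ the lemma yields $|(\gamma v_1-v_1)\cdot v_2|\geq ||\gamma||$, which is exactly hypothesis (\ref{ineq1-thm1}). There is a small typo in the lemma, where $g\in\mathsf{SL}_d$ should read $g\in\mathsf{SL}_n$; we use the version for $\mathsf{SL}_n(A)$. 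Since $||\rho_{nm}(g)||=||g||^m$, the properties of being discrete, metrically proper, or quasi-isometrically embedded are preserved, with the constant $a$ replaced by $ma$. Discreteness in fact follows automatically from (\ref{ineq1-thm1}) via (\ref{G12-discrete}).

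Finally, Proposition \ref{theorem-1} applied to $\rho_{nm}\iota_\nu(\Gamma_1)$ and $\rho_{nm}\iota_\nu(\Gamma_2)$ produces $w\in\mathsf{SL}_{d+1}(A)$ such that the group generated in (\ref{gengp}) is a discrete subgroup of $\mathsf{SL}_{d+1}(A)$ isomorphic to $\Gamma_1\ast\Gamma_2$, and it is metrically proper or quasi-isometrically embedded when the factors are.

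The main obstacle is the first step: obtaining the uniform lower bound $||\iota_\nu(g)||\geq 2|v_1\cdot v_2|$ for all nontrivial $g$ while keeping the $\iota_\nu$-distortion bounded. This requires checking that discreteness gives $\inf_{g\neq \textup{I}}||g-\textup{I}_\ell||>0$, which holds because $\Gamma_i$ is a discrete subgroup of the metric group in the norm topology.
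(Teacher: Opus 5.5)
Your proposal is correct and follows the paper's proof essentially step for step. You use $\iota_\nu$ with $|\nu|$ large (the paper takes $|\nu|\geq 1+\frac{2}{\varepsilon}|v_1\cdot v_2|$) to push every nontrivial element of $\Gamma_i$ above the threshold $2|v_1\cdot v_2|$ of Lemma \ref{lemma}, then apply $\rho_{nm}$ with $n=2\ell$ and $m=n^2v(A)$, and conclude with Proposition \ref{theorem-1}; your side checks (the homomorphism property of $\iota_\nu$, the norm comparisons, and the typo $\mathsf{SL}_d$ for $\mathsf{SL}_n$ in the lemma) are accurate.
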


\begin{proof} Let $m:=n^2v(A)$ and $n:=2\ell$. By  Lemma \ref{lemma} there exist $v_1,v_2\in A^{d}$, $d=\binom{n+m}{n}$ (defined in (\ref{vect1}) and (\ref{vect2})), and a representation $\rho_{nm}:\mathsf{SL}_{d}(A)\rightarrow \mathsf{SL}_{d}(A)$ such that  for every $g\in \mathsf{SL}_{n}(A)$ with $||g||\geq 2|v_1\cdot v_2|$, the following bound holds \begin{align}\label{rho2nm} |(\rho_{nm}(g)v_1-v_1)\cdot v_2|\geq ||\rho_{nm}(g)||.\end{align} Given the discrete subgroups $\Gamma_1,\Gamma_2$ of $\mathsf{SL}_\ell (k)$, there is  $\varepsilon>0$ with  $$||g-\textup{I}_{\ell}||\geq \varepsilon \ \  \forall \ g\in (\Gamma_1\cup \Gamma_2)\smallsetminus \{\textup{I}_\ell \}.$$ Fix $\nu \in A$ with $|\nu|\geq 1+\frac{2}{\varepsilon}|v_1\cdot v_2|$. By the definition of the representation $\iota_{\nu}$ in (\ref{def-iota}), for every $g\in \Gamma_i\smallsetminus \{\textup{I}_\ell\}$ it follows that $$||\iota_{\nu}(g)||\geq |\nu|\cdot ||g-\textup{I}_\ell ||\geq \max\big\{2|v_1 \cdot v_2|, ||g-\textup{I}_{\ell}||\big\}.$$ Any non-trivial element of the subgroup $\iota_{\nu}(\Gamma_i)$ of $\mathsf{SL}_{n}(A)$ satisfies (\ref{rho2nm}), and $\iota_{\nu}(\Gamma_i)$ is metrically proper (resp. quasi-isometrically embedded) in $\mathsf{SL}_{n}(k)$ if and only if $\Gamma_i$ is. Proposition \ref{theorem-1} applied to $\rho_{nm}(\iota_{\nu}(\Gamma_1))$ and $\rho_{nm}(\iota_{\nu}(\Gamma_2))$ shows that there is $w\in \mathsf{SL}_{d+1}(A)$ such that \begin{align*} \Bigg\langle \begin{pmatrix}[0.9] 1& \\ & \rho_{nm}(\iota_{\nu}(\Gamma_1))  \end{pmatrix}, w\begin{pmatrix}[0.9] 1& \\ & \rho_{nm}(\iota_{\nu}(\Gamma_2))  \end{pmatrix}w^{-1} \Bigg\rangle\end{align*} is a discrete (resp. metrically proper or quasi-isometrically embedded) subgroup of $\mathsf{SL}_{d+1}(k)$, isomorphic to $\Gamma_1\ast \Gamma_2$, and contained in $\mathsf{SL}_{d+1}(A)$.  \end{proof}

\begin{proof}[Proof of Theorem \ref{thm-embd}] Let $m:=n^2$. By applying Lemma \ref{lemma} for $A=k$, there is a representation $\rho_{nm}:\mathsf{SL}_n(k)\rightarrow \mathsf{SL}_d(k)$, $d=\binom{n+m}{n}$, and $v_1,v_2\in k^d$ and $M>1$ such that for any $g\in \mathsf{SL}_n(k)$ with $||g||>M$, \begin{align*} |(\rho_{nm}(g)v_1-v_1)\cdot v_2|\geq ||\rho_{nm}(g)||.\end{align*}

Since $\Gamma$ is discrete in $\mathsf{SL}_n(k)$ and $F:=\big\{g\in \Gamma: ||(\rho_{nm}(g)v_1-v_1)\cdot v_2|\leq ||\rho_{nm}(g)||\big\}$ is contained in $\{g\in \Gamma:||g||\leq M\}$, $F$ is finite. By Baire's category theorem, there are $v_1',v_2'\in k^d$, arbitrarily close to $v_1,v_2$ respectively, with $$|(\rho_{nm}(g)v_1'-v_1')\cdot v_2'|\geq \frac{1}{2} ||\rho_{nm}(g)||$$ for every $g\in (\Gamma \smallsetminus \{\textup{I}_{n}\})\cup (\mathsf{SL}_n(k)\smallsetminus \{h:||h||\leq M\})$. Up to conjugating the representation $$g\mapsto \begin{pmatrix}  1 & \\ & \rho_{nm}(g)\end{pmatrix}, \ g\in \mathsf{SL}_n(k)$$ in $\mathsf{GL}_{d+1}(k)$, we obtain a representation $\tau:\mathsf{SL}_n(k)\rightarrow \mathsf{SL}_{d+1}(k)$ and $0<\epsilon<1$, such that for any $g\in (\Gamma \smallsetminus \{\textup{I}_{n}\})\cup (\mathsf{SL}_n(k)\smallsetminus \{h:||h||\leq M\})$, \begin{align}\nonumber \tau(g)B_{\epsilon}([e_1])&\subset \mathbb{P}(k^{d+1})\smallsetminus \mathcal{N}_{\epsilon}(e_{d+1}^{\perp})\\ \label{Gamma-pp} ||\tau(g)v||&\geq \epsilon ||v||, \ \forall \  [v]\in B_{\epsilon}([e_1]). \end{align} 

Let us set $$T:=\epsilon^{-3}\big(\max \big\{||\tau(h)||: ||h||\leq 6M\big\}\big)^2.$$ By following a standard ping-pong argument (e.g. as in \cite{Tits}), there is a Zariski-dense, discrete finitely generated, free subgroup $\Delta$ of $\mathsf{SL}_{d+1}(k)$ such that for any $g'\in \Delta\smallsetminus \{\textup{I}_{d+1}\}$,
\begin{align}\nonumber g' \Big(\mathbb{P}(k^{d+1})\smallsetminus \mathcal{N}_{\epsilon}\big(e_{d+1}^{\perp}\big)\Big)&\subset B_{\epsilon}([e_1])\\ \label{Delta-pp} ||g'v||\geq T||v|| \ \ \forall \  [v']\in \mathbb{P}&(k^d)\smallsetminus \mathcal{N}_{\epsilon}\big(e_{d+1}^{\perp}\big).\end{align} By the previous inclusions, as in the proof of Proposition \ref{theorem-1}, the subgroup $\langle \tau(\Gamma),\Delta\rangle\simeq \Gamma \ast \Delta$ of $\mathsf{SL}_{d+1}(k)$ is discrete and for any product $b_1\tau(a_1)\cdots \tau(a_{\ell-1}) b_{\ell}\tau(a_{\ell})\in \mathsf{SL}_{d+1}(k)$, where $a_{1},\ldots,a_{\ell-1}\in \tau(\Gamma)\cup (\mathsf{SL}_n(k)\smallsetminus \{h:||h||\leq M\})$ are non-trivial, $b_1,\ldots, b_{\ell}\in \Delta\smallsetminus \{\textup{I}_{d+1}\}$, $a_{\ell}\in \tau(\Gamma)\cup (\mathsf{SL}_n(k)\smallsetminus \{h:||h||\leq M\})$ and $\ell\geq 2$, the previous ping-pong conditions imply  \begin{equation}\label{bound-cor1} ||b_1\tau(a_1)\cdots \tau(a_{\ell-1})b_{\ell}\tau(a_\ell)\big|\big|\geq \sqrt[]{T^{\ell}}||\tau(a_1)||\cdot||b_1||\cdots ||\tau(a_{\ell-1})||\cdot||b_{\ell}||\cdot ||\tau(a_{\ell})||.\end{equation}\enlargethispage{.5\baselineskip} We claim that $\langle \tau(\Gamma),\Delta\rangle \cap \tau(\mathsf{SL}_n(k))=\tau(\Gamma)$. First, for any $g'\in \Delta$ non-trivial, $||(g')^{\pm 1}||\geq T$ and hence by the choice of $T>0$, $$||(g')^{-1}||\geq 2\max\{||\tau(h)||:||h||\leq 6M\}.$$ If $g'\in \tau(\mathsf{SL}_n(k))$, \hbox{there is $h\in \mathsf{SL}_n(k)$ with $||h||\geq 6M$} and $\tau(h)g'=\textup{I}_{d+1}$, which is absurd since the ping-pong inclusion (\ref{Gamma-pp}) and (\ref{Delta-pp}) give $$||\tau(h)g'||\geq \epsilon T>1.$$ This shows $\Delta \cap \tau(\mathsf{SL}_n(k))=\{\textup{I}_{d+1}\}$. 

If $\tau(\mathsf{SL}_n(k))\cap \langle \tau(\Gamma),\Delta\rangle$ were not $\tau(\Gamma)$, there would exist $\ell\geq 2$ and $b_1,\ldots,b_{\ell}\in \Delta$ non-trivial, $\tau(a_1),\ldots,\tau(a_{\ell-1})\in \tau(\Gamma)$ non-trivial such that $g'':=b_1\tau(a_1)\cdots b_{\ell-1}\tau(a_{\ell-1})b_{\ell}\in \tau(\mathsf{SL}_n(k))$. By (\ref{bound-cor1}), $$||(g'')^{-1}||\geq 2\max\{||\tau(g)||:||g||\leq 6M\}$$ hence there is $a_{\ell}\in \mathsf{SL}_n(k)$ with $||a_{\ell}||\geq 6M$ such that $$g''\tau(a_{\ell})=b_1\tau(a_1)\cdots b_{\ell}\tau(a_{\ell})=\textup{I}_{d+1}$$ which is absurd again by (\ref{bound-cor1}). This verifies $\tau(\mathsf{SL}_n(k))\cap \langle \tau(\Gamma),\Delta\rangle=\tau(\Gamma)$ and completes the proof of the theorem. \end{proof} 

Margulis' normal subgroup theorem \cite[Thm. IX 5.6]{Margulis} implies that when $\Gamma$ is not virtually free, $\Gamma'$ above fails to virtually be a lattice in the $k_0$-points of any $k_0$-semisimple group over any local field $k_0$. 

We close this section with two further consequences of Theorem \ref{mainthm-1'} on linearity of free products.

\begin{proof}[Proof of Corollary \ref{R[t]}] Let $\textup{Frac}(\mathsf{R})$ be the field of fractions of $\mathsf{R}$. Consider the field of rational functions $\textup{Frac}(\mathsf{R})(t)$ equipped with the discrete valuation $$\textup{val}\left(f_1f_2^{-1}\right):=\textup{deg}f_2-\textup{deg}f_1,\  \ f_1,f_2\in \mathsf{R}[t], \ f_2\neq 0$$ where $\textup{deg}f_i$ is the degree of $f_i$. Note that $\mathsf{R}[t]$ is discrete in $\textup{Frac}(\mathsf{R})(t)$ and $v(\mathsf{R}[t])=1$, hence the theorem follows immediately by applying Theorem \ref{mainthm-1'} for $A=\mathsf{R}[t]$.
\end{proof}

\begin{corollary}\label{SO} Let $n\geq 2$ be an integer and $d=\binom{4n^2+2n}{2n}$.
\begin{enumerate}\setlength{\itemsep}{1.5pt}

\item \label{SO1} For any prime $p$,  $\mathsf{SO}_n(\mathbb{Z}[\frac{1}{p}])\ast \mathbb{Z}$ embeds as a subgroup of $\mathsf{SL}_{d+1}(\mathbb{Z}[\frac{1}{p}])$.

\item \label{SO2} Let $\mathsf{R}$ be a subdomain of $\mathbb{R}$ and $\mathsf{Frac}(\mathsf{R})$ be the field of fractions of $\mathsf{R}$. For any irreducible polynomial $q(t)\in \mathsf{Frac}(\mathsf{R})[t]$ the free product $\mathsf{SO}_n(\mathsf{R}[t,\frac{1}{q(t)}])\ast \mathbb{Z}$ embeds as a subgroup of $\mathsf{SL}_{d+1}(\mathsf{R}[t,\frac{1}{q(t)}])$.
\end{enumerate}

\end{corollary}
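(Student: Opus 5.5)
The plan is to apply Theorem \ref{mainthm-1'} with $\ell=n$, taking $\Gamma_1=\mathsf{SO}_n(A)$ and $\Gamma_2$ an infinite cyclic subgroup of $\mathsf{SL}_n(A)$, for a suitable discretely valued field $k$ and unbounded subring $A\subset k$ with $v(A)=1$. Then $d=\binom{4n^2+2n}{2n}$, which is the dimension in the statement. Theorem \ref{mainthm-1'} requires only that $\Gamma_1,\Gamma_2$ be discrete subgroups of $\mathsf{SL}_n(k)$ contained in $\mathsf{SL}_n(A)$, and it produces an embedding of $\Gamma_1\ast\Gamma_2$ into $\mathsf{SL}_{d+1}(A)$. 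Discreteness of $A$ in $k$ is not needed for this. What is needed is that $\Gamma_1$ and $\Gamma_2$ each be discrete in $\mathsf{SL}_n(k)$, i.e.\ bounded away from the identity in the $\ell_\infty$-norm.

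For (\ref{SO1}) I would take $k=\mathbb{Q}$ with the $p$-adic valuation and $A=\mathbb{Z}[\frac{1}{p}]$. This $A$ is unbounded, since $|1/p|=e>1$, and $v(A)=1$. However, $\mathsf{SO}_n(\mathbb{Z}[\frac1p])$ is not discrete in $\mathsf{SL}_n(\mathbb{Q}_p)$, because it is dense in the compact group $\mathsf{SO}_n(\mathbb{Z}_p)$-part. So the valuation must be chosen more carefully. The key observation is that $\mathsf{SO}_n(\mathbb{Z}[\frac1p])$ is discrete in $\mathsf{SO}_n(\mathbb{R})\times \mathsf{SO}_n(\mathbb{Q}_p)$, and $\mathsf{SO}_n(\mathbb{R})$ is compact. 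Hence its image in $\mathsf{SO}_n(\mathbb{Q}_p)$ is discrete: an element close to $\textup{I}_n$ $p$-adically has bounded real entries (orthogonality gives real entries of absolute value $\le 1$) and $p$-adic entries in $\mathbb{Z}_p$. Concretely, if $\|g-\textup{I}_n\|_p\le p^{-1}$, then the entries of $g-\textup{I}_n$ lie in $p\mathbb{Z}_p\cap\mathbb{Z}[\frac1p]=p\mathbb{Z}$ with real absolute value $\le 2$. For $p\ge 3$ this forces $g=\textup{I}_n$. For $p=2$ the entries of $g-\textup{I}_n$ lie in $\{0,\pm2\}$, and I would rule this out using orthogonality of the columns, or by working with $p^{2}$ instead of $p$. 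So $\Gamma_1=\mathsf{SO}_n(\mathbb{Z}[\frac1p])$ is discrete in $\mathsf{SL}_n(\mathbb{Q}_p)$ and lies in $\mathsf{SL}_n(A)$. For $\Gamma_2$ I would take $\langle u\rangle$ with $u$ the elementary unipotent $\textup{I}_n+p^{-1}E_{12}$. Its powers $u^j=\textup{I}_n+jp^{-1}E_{12}$ satisfy $\|u^j-\textup{I}_n\|=|j/p|_p\ge$ const only if $p\nmid j$, which fails in general, so I would instead use the diagonal element $\textup{diag}(p,p^{-1},1,\dots,1)$. Its nontrivial powers have norm $p^{|j|}\ge p$, so $\langle u\rangle$ is discrete and infinite cyclic. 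Theorem \ref{mainthm-1'} then gives the embedding $\mathsf{SO}_n(\mathbb{Z}[\frac1p])\ast\mathbb{Z}\hookrightarrow \mathsf{SL}_{d+1}(\mathbb{Z}[\frac1p])$.

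For (\ref{SO2}) I would run the identical argument with $k=\mathsf{Frac}(\mathsf{R})(t)$ equipped with the $q(t)$-adic valuation and $A=\mathsf{R}[t,\frac1{q(t)}]$, which has $v(A)=1$ via the element $1/q$. The archimedean compactness is replaced by the following. Suppose $g\in\mathsf{SO}_n(A)$ is $q$-adically close to $\textup{I}_n$. Then $g$ has entries in the local ring at $q$, so they are polynomials. Orthogonality, $\sum_i g_{ij}^2=1$ over the real field $\mathsf{Frac}(\mathsf{R})(t)$, with leading coefficients in $\mathbb{R}$, forces each entry to have degree $0$, since leading terms of squares cannot cancel. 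Hence $g$ is constant. A constant matrix $q$-adically close to $\textup{I}_n$ equals $\textup{I}_n$, because $\textup{val}_q(c)=0$ for any nonzero constant $c$. This is where the hypothesis $\mathsf{R}\subset\mathbb{R}$ is used, and it is the step I expect to be the main obstacle, along with the analogous integrality bookkeeping in (\ref{SO1}) at $p=2$. For the cyclic factor I would take $\textup{diag}(q,q^{-1},1,\dots,1)$, and then apply Theorem \ref{mainthm-1'}.
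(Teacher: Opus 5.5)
Your route is the paper's: apply Theorem~\ref{mainthm-1'} with $\ell=n$, $A=\mathbb{Z}[\frac1p]$ (resp.\ $A=\mathsf{R}[t,\frac1{q(t)}]$ with the $q$-adic valuation), $v(A)=1$, $\Gamma_1=\mathsf{SO}_n(A)$ and $\Gamma_2$ infinite cyclic. Your two discreteness arguments are correct and spell out what the paper only asserts or leaves implicit. The first uses compactness of $\mathsf{SO}_n(\mathbb{R})$ to put the entries of $g-\textup{I}_n$ in $p\mathbb{Z}\cap[-2,2]$. The second is the leading-coefficient argument that real polynomials whose squares sum to $1$ are constant.

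Three points need fixing.

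First, your sentence claiming that $\mathsf{SO}_n(\mathbb{Z}[\frac1p])$ is not discrete $p$-adically is false, and it contradicts what you then prove. Its intersection with $\mathsf{SL}_n(\mathbb{Z}_p)$ is the finite group $\mathsf{SO}_n(\mathbb{Z})$. No change of valuation is needed, and in fact you make none.

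Second, at $p=2$ orthogonality cannot exclude the leftover cases. The matrix $\textup{diag}(-1,-1,1,\dots,1)\in\mathsf{SO}_n(\mathbb{Z})$ has $g-\textup{I}_n$ with entries in $\{0,-2\}$. Only your alternative works: require the entries of $g-\textup{I}_n$ to lie in $4\mathbb{Z}$, or simply note that the small ball contains finitely many elements.

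Third, and this is an actual error: in (\ref{SO2}) the element $\textup{diag}(q,q^{-1},1,\dots,1)$ need not lie in $\mathsf{SL}_n(A)$. The point is that $q$ itself need not belong to $\mathsf{R}[t,\frac1q]$ when $q\notin\mathsf{R}[t]$. Take $\mathsf{R}=\mathbb{Z}$ and $q=t-\frac12$, and suppose $\frac{2t-1}{2}=\sum_{j=0}^{m}P_j(t)2^j(2t-1)^{-j}$ with $P_j\in\mathbb{Z}[t]$. Multiplying by $(2t-1)^m$ gives an integer polynomial with constant term $\pm\frac12$, which is impossible. Theorem~\ref{mainthm-1'} needs $\Gamma_2\subset\mathsf{SL}_n(A)$ in order to land in $\mathsf{SL}_{d+1}(A)$, so take instead $\Gamma_2=\langle\textup{I}_n+E_{12}\rangle$. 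Since $\mathsf{R}$ has characteristic $0$, every nonzero integer $j$ is a nonzero constant and has $q$-adic absolute value $1$. Hence every nontrivial power $\textup{I}_n+jE_{12}$ is at distance exactly $1$ from $\textup{I}_n$, and this $\Gamma_2$ is discrete.
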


\begin{proof} For (\ref{SO1}), $\mathsf{SO}_n(\mathbb{Z}[\frac{1}{p}])$ is a discrete subgroup of $\mathsf{SL}_n(\mathbb{Z}[\frac{1}{p}])$. Then (\ref{SO1}) follows by applying Theorem \ref{mainthm-1'} for $A=\mathbb{Z}[\frac{1}{p}]$, $\Gamma_1=\mathsf{SO}_n(\mathbb{Z}[\frac{1}{p}])$ and $\Gamma_2=\mathbb{Z}$.

\par For (\ref{SO2}), fix the $q(t)$-adic valuation $\textup{val}:\textup{Frac}(\mathsf{R})(t) \rightarrow \mathbb{Z}\cup\{\infty\}$ where, for $F(t)\in \textup{Frac}(\mathsf{R})[t]$, $\textup{val}(F(t))$ is the largest integer $\ell$ such that $q(t)^\ell$ divides $F(t)$.

Then $\mathsf{SO}_n(\mathsf{R}[t,\frac{1}{q(t)}])$ is a discrete subgroup of $\mathsf{SL}_n(\textup{Frac}(\mathsf{R})(t))$. To see this, any non-trivial element $g=(g_{ij}(t))_{i,j=1}^{n}$ in $\mathsf{SO}_n(\mathsf{R}[t,\frac{1}{q(t)}])$ satisfies $$g_{11}(t)^2+g_{12}(t)^2+\cdots+g_{nn}(t)^2=n.$$ There are two possibilities, either $\min_{ij}\textup{val}(g_{ij}(t))\geq 0$ or $g_{ij}(t)=\frac{p_{ij}(t)}{q(t)^{m_{ij}}}$ for some $(i,j)$, $m_{ij}\geq 1$ and coprime polynomials $p_{ij}(t),q(t)\in \textup{Frac}(\mathsf{R})[t]$. In the first case $g_{ij}(t)\in \textup{Frac}(\mathsf{R})[t]$ for every $(i,j)$, hence $g\in \mathsf{SO}_n(\mathsf{R})$ and $||g-\textup{I}_n||=1$, and in the latter $||g-\textup{I}_n||\geq 1$ holds. 
 
Thus, $\mathsf{SO}_n(\mathsf{R}[t,\frac{1}{q(t)}])$ is  discrete  in $\mathsf{SL}_n(\textup{Frac}(\mathsf{R})(t))$ and (\ref{SO2}) follows by applying Theorem \ref{mainthm-1'} for $A=\mathsf{R}[t,\frac{1}{q(t)}]$, $\Gamma_1=\mathsf{SO}_n(\mathsf{R}[t,\frac{1}{q(t)}])$ and $\Gamma_2=\mathbb{Z}$.
\end{proof}

\section{Nonarchimdean quasi-isometric  amalgams of free groups}\label{Gauss}

\subsection{Gauss valuation and quasi-isometric embedding} In this section, $k$ denotes a nonarchimedean local field equipped with a discrete valuation $\textup{val}:k\rightarrow \mathbb{Z}\cup \{\infty\}$ and let $k(t)$ be the field of rational functions field over $k$ of one variable. The {\em Gauss valuation} is the discrete  valuation $\textup{val}: k(t) \rightarrow \mathbb{Z} \cup \{\infty\}$ extending the valuation on $k$ as follows:
 for $a_0+\dots+a_rt^r \in k[t]$,  $$\textup{val}(a_0+\dots+a_rt^r ) := \min_{0 \leq i \leq r} \textup{val}(a_i)$$ and $\textup{val}(f_1 f_2^{-1}) = \textup{val}(f_1) - \textup{val}(f_2)$ for $f_1, f_2 \in k[t]$ with $f_2 \neq 0$. For more background see \cite[Ch. VI, \S 14]{Zar-Sam}. 

 Equip $\mathsf{SL}_d(k(t))$ with the $\ell_{\infty}$-norm induced by the Gauss valuation on $k(t)$. Clearly any discrete subgroup of $\mathsf{SL}_d(k)$ is metrically proper in $\mathsf{SL}_d(k(t))$. A representation $\rho:\Delta \rightarrow \mathsf{SL}_d(k(t))\times \mathsf{SL}_d(k(t))$ of a finitely generated group $\Delta$ is a quasi-isometric embedding if $\rho$ is a quasi-isometric embedding in $\mathsf{SL}_{2d}(k(t))$.

For the definition of Anosov representations in $\mathsf{SL}_d(k)$, where $k$ is a nonarchimedean local field, we refer to \cite[Def. 5.35]{KLP-nonar} (see also \cite[Rmk. 1.6 (a)]{GGKW}). Fix a Cartan decomposition $\mathsf{SL}_2(k)=\mathsf{K}\mathsf{A}^{+}\mathsf{K}$, where $\mathsf{K}$ is compact and $\mathsf{A}^{+}$ are the diagonal matrices whose diagonal entries have moduli in non-increasing order. The following lemma is a consequence of the dynamical properties of the limit maps of Anosov groups (e.g. see \cite[Prop. 3.5]{PSW}). 

\begin{lemma}\label{doublecoset} Let $\Delta$ be a quasi-isometrically embedded subgroup of $\mathsf{SL}_2(k)$ and $\langle w\rangle$ a maximal cyclic subgroup of $\Delta$ contained in the diagonal subgroup of $\mathsf{SL}_2(k)$. There is $\varepsilon>0$ such that $$\Delta=\langle w\rangle \Delta_{\varepsilon } \langle w\rangle$$where $\Delta_{\varepsilon }:= \bigcap\big(\Delta \cap \big\{g\in \mathsf{SL}_2(k): |g_{jj'}|\geq \varepsilon ||g||,  j,j'=1,2\big\}\big)$.\end{lemma} 

\begin{proof} Let $\partial_{\infty}\Delta$ be the Gromov boundary of $\Delta$ and $\overline{\Delta}:=\Delta \sqcup \partial_{\infty}\Delta$ the bordification of $\Delta$, equipped with a visual metric. Denote by $w^{+}, w^{-}\in \partial_{\infty}\Delta$ the fixed points of $w$. As $\langle w\rangle$ acts cocompactly on $\overline{\Delta}\smallsetminus \{w^{+},w^{-}\}$, there are $\epsilon, \epsilon'>0$ such that \begin{align*}\Delta=\langle w\rangle \mathcal{X}_{\epsilon} & \langle w\rangle\\ \mathcal{X}_{\epsilon}:=\{\gamma \in \Delta:\textup{dist}(\gamma^{\pm 1}, \{w^{+},w^{-1}\})\geq \epsilon \}&\cap\{\gamma \in \Delta: \textup{dist}(\gamma w^{\pm}, \{w^{+},w^{-}\})\geq \epsilon' \}\end{align*} and $\textup{dist}(\star, \{w^{+},w^{-}\})$ denotes the distance of $\star \in \overline{\Delta}$ from $\{w^{+},w^{-}\}$ with respect to the visual metric. In particular, there is $\epsilon_1>0$ such that for every $\gamma \in \mathcal{X}_{\epsilon}$, \begin{align}\label{epsilon1-bound} |\gamma e_i\cdot e_j|\geq \epsilon_1 ||\gamma e_i||, \ i,j=1,2.\end{align}

Since $\Delta$ is quasi-isometrically embedded in $\mathsf{SL}_2(k)$ it is Anosov and let $\xi: \partial_{\infty}\Delta \rightarrow \mathbb{P}(k^2)$ be the Anosov limit map where $\{\xi(w^{+}),\xi(w^{-})\}=\{[e_1], [e_2]\}$ are the eigenlines of $w$. For any infinite sequence $(\gamma_n)$ in $\mathcal{X}_{\epsilon}$, any accumulation point of $(\gamma_n^{\pm 1})$ avoids a fixed neighborhood of $\{w^{+},w^{-}\}$ in $\overline{\Delta}$. By writing $\gamma_n=w_{\gamma_n}a(\gamma_n)w_{\gamma_n}'$ in the Cartan decomposition, up to subsequence, $\xi(\lim\gamma_n^{-1})=\lim [(w_{\gamma_n}')^{-1}e_2]$ and thus both coordinates of any accumulation point of the vectors $((w_{\gamma_n}')^{-1}e_2)$ have absolute value bounded below by a positive constant independent from $(\gamma_n^{\pm 1})$. Therefore, there is a finite subset $F$ of $\mathcal{X}_{\epsilon}$ with $$\epsilon_2:=\inf_{\gamma \in \mathcal{X}_{\epsilon}\smallsetminus F} \Big\{|w_{\gamma}'e_j\cdot e_1|: \gamma=w_{\gamma}a(\gamma)w_{\gamma}', a(\gamma)\in \mathsf{A}^{+}, w_{\gamma},w_{\gamma}'\in \mathsf{K}, j=1,2\Big\}>0.$$ This shows that for every $\gamma \in \mathcal{X}_{\epsilon}\smallsetminus F$, $$||\gamma e_j||\geq |w_{\gamma}'e_j\cdot e_1|\cdot ||\gamma||\geq \epsilon_2||\gamma||.$$ For any element of $\mathcal{X}_{\epsilon}$ its entries are non-zero, thus, by (\ref{epsilon1-bound}) and since $\epsilon_2>0$, there is $\varepsilon>0$ such that $\min_{ij}|\gamma_{ij}|\geq \varepsilon ||\gamma||$ for any $\gamma \in \mathcal{X}_{\epsilon}$. This completes the proof of the lemma. \end{proof}

The main result of this section is the following theorem from which Theorem \ref{qie-thm} follows.

\begin{theorem}\label{prod-qie} Let $k$ be a local field and $\Gamma_1,\Gamma_2$ be quasi-isometrically embedded subgroups of $\mathsf{SL}_2(k)$ such that $\Gamma_1\cap \Gamma_2$ is a maximal cyclic subgroup of $\Gamma_i$, $i=1,2$. 
\begin{enumerate}
\item \label{part-1} Set $V:=\textup{Sym}^4k(t)^2\simeq k(t)^5$ and assume $k$ is of chararcteristic not equal to $2$ or $3$.  There exists $\alpha\in \mathsf{GL}(V)$, centralizing the group $\textup{sym}^4(\Gamma_1\cap \Gamma_2)$, such that the representation $\rho:\Gamma_1\ast_{\Gamma_1\cap \Gamma_2} \Gamma_2\rightarrow \mathsf{SL}(V)\times \mathsf{SL}(V)$, $$\rho(\gamma)= \left\{\begin{matrix}[1.2]
\big(\textup{sym}^4(\gamma), \textup{sym}^4(\gamma) \big)& \gamma\in \Gamma_1  \\
\big(\alpha \textup{sym}^4(\gamma) \alpha^{-1}, \alpha^{-1}\textup{sym}^4(\gamma)\alpha \big)  & \gamma\in \Gamma_2  \\
\end{matrix}\right.$$ is a quasi-isometric embedding.

\item \label{part-2} There exists $\beta\in \mathsf{SL}_2(k(t))$, centralizing the group $\Gamma_1\cap \Gamma_2$, such that the representation~$\rho':\Gamma_1\ast_{\Gamma_1\cap \Gamma_2} \Gamma_2\rightarrow \mathsf{SL}_2(k(t))\times \mathsf{SL}_2(k(t))$, $$\rho'(\gamma)= \left\{\begin{matrix}[1.2]
(\gamma, \gamma)& \gamma\in \Gamma_1  \\
(\beta \gamma \beta^{-1}, \beta^{-1}\gamma\beta )  & \gamma\in \Gamma_2  \\
\end{matrix}\right.$$ is faithful and has metrically proper image.\end{enumerate}
\end{theorem}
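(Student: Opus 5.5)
Conjugating inside $\mathsf{SL}_2(k)$, I will assume that the maximal cyclic subgroup $\Gamma_1\cap\Gamma_2=\langle w\rangle$ is diagonal, $w=\mathrm{diag}(\lambda,\lambda^{-1})$ with $|\lambda|>1$. I will take $\beta=\mathrm{diag}(t,t^{-1})$ in part (\ref{part-2}). For part (\ref{part-1}) I will take $\alpha$ diagonal in the basis $e_1^4,e_1^3e_2,\dots,e_2^4$ of $V$, with entries powers of $t$, for instance $\alpha=\mathrm{diag}(t^{a_0},\dots,t^{a_4})$ with the $a_i$ chosen generic (pairwise differences distinct). Diagonal matrices centralize the diagonal group $\mathrm{sym}^4\langle w\rangle$, so both $\rho$ and $\rho'$ are well defined on the amalgam. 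Every element of $\Gamma_1\ast_{\langle w\rangle}\Gamma_2$ not in a factor has a normal form $g_1g_2\cdots g_\ell$ with $g_i$ alternately in $\Gamma_1\smallsetminus\langle w\rangle$ and $\Gamma_2\smallsetminus\langle w\rangle$. By Lemma \ref{doublecoset} I can rewrite this as $w^{n_0}\delta_1 w^{n_1}\delta_2\cdots\delta_\ell w^{n_\ell}$ with each $\delta_i\in(\Gamma_{j_i})_\varepsilon$, so every entry of $\delta_i$ has absolute value at least $\varepsilon\|\delta_i\|$. The powers of $w$ can then be absorbed: since $w$ commutes with $\beta$, the factor $\beta^{\pm1}$ slides past $w^{n_i}$.

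The key computation is a "Gauss-degree" ping-pong. Elements of $k$ have Gauss valuation given by the $k$-valuation, while conjugation by $\beta$ multiplies the off-diagonal entries by $t^{\pm2}$. In the first coordinate, the image of a normal-form word is a product of matrices $w^{n}\delta$ and $\beta w^n\delta\beta^{-1}$. I will track the leading coefficient in $t$: after the conjugations, the product is a Laurent polynomial matrix in $t$. Its top-degree coefficient in $t$ (where alternating factors contribute $t^{2}$ via off-diagonal entries) is, up to sign, a product of specific entries of the $w^{n_i}\delta_i$. Its absolute value is then at least $\prod_i \varepsilon\|w^{n_i}\delta_i\|$, up to a controlled loss. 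The factor $\varepsilon$ here comes from Lemma \ref{doublecoset} together with the fact that $w$ is diagonal.

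The Gauss norm of a polynomial matrix is the max over coefficients, so it dominates the top coefficient. This gives $\|\rho'(\gamma)\|\gtrsim \varepsilon^{\ell}\prod\|w^{n_i}\delta_i\|$. The second coordinate uses $\beta^{-1}$ and treats the words where the first coordinate's degree cancels: pairing the two coordinates ensures that the leading-degree term in at least one coordinate is a monomial product with no cancellation. Faithfulness and metric properness then follow as in Proposition \ref{theorem-1}: only finitely many words of bounded length with factors of bounded norm have norm $\le L$, because each $\Gamma_i$ is discrete in $\mathsf{SL}_2(k)$ and $\|\cdot\|\ge1$.

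For quasi-isometry in part (\ref{part-1}), the lower bound must be of the form $e^{a|\gamma|}$. The issue in rank one is that $\varepsilon^{\ell}$ losses may beat the norm growth when the factors $\delta_i$ are short. The plan is to use $\mathrm{sym}^4$ and a generic $\alpha$ so that each syllable contributes an extra strictly positive power of $t$ to the leading degree. I would then compute the Gauss norm as $e^{-\mathrm{val}}$ of the top coefficient. The degree in $t$ grows linearly in $\ell$, but the Gauss norm ignores degree; so instead I will use the $\ell_\infty$ structure to get $\|\rho(\gamma)\|\,\|\rho(\gamma^{-1})\|\ge c\prod_i\|g_i\|\,\|g_i^{-1}\|$. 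This bound follows from non-cancellation of the top coefficient, with the $\varepsilon$ absorbed because the top coefficient is exactly a product of entries of $\mathrm{sym}^4(\delta_i)$, each bounded below by $\varepsilon^4\|\delta_i\|^4$. Combining with quasi-isometric embedding of each $\Gamma_i$ gives $\ge e^{a\sum|g_i|-c\ell}$. Since each $|g_i|\ge1$ and the losses are uniform, a choice where the $\varepsilon$ loss is beaten by $\|g_i\|\|g_i^{-1}\|$ after passing to a finite-index adjustment yields linear growth. The characteristic hypothesis ensures the binomial coefficients $4,6$ in $\mathrm{sym}^4$ are nonzero, so the relevant top coefficients do not vanish.

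I expect the hardest step to be proving non-cancellation of the leading $t$-coefficient, and hence the uniform per-syllable constant, for the chosen exponents of $\alpha$. The $\varepsilon$ losses are the most delicate part of that step.
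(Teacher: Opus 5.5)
Your framework (normal forms from Lemma \ref{doublecoset}, leading $t$-coefficients, the Gauss norm dominating them) is the paper's. However, your choice of conjugators removes both mechanisms that make the estimates work, so there is a genuine gap.

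\textbf{Per-syllable gain.} Both $\beta=\mathrm{diag}(t,t^{-1})$ and your $\alpha$ consist of pure powers of $t$, and the Gauss norm does not see $t$. So each syllable contributes to the leading coefficient only a product of its own entries, bounded below by something like $(\varepsilon\|\delta_i\|)^{m}$, which need not exceed $1$. A bound $\varepsilon^{O(\ell)}\prod_i\|\delta_i\|^{O(1)}$ neither bounds the syllable length $\ell$ (needed for metric properness) nor gives linear growth (needed for quasi-isometry). Your choice can genuinely fail: if $\tau_i\in\Gamma_i\smallsetminus\langle w\rangle$ are nontrivial with entries in the valuation ring of $k$ (possible when the $\Gamma_i$ have torsion), then $\|\rho'((\tau_1\tau_2)^n)\|\le 1$ for all $n$, although these elements are pairwise distinct. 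The paper takes $\beta=\mathrm{diag}(1/(tz),tz)$ with $z\in k$, $|z|\ge\varepsilon^{-4}$, and puts $z_1,z_2\in k$ with $|z_i|\ge e^{8c}/(|6|\varepsilon^{18})$ into $\alpha$. Every syllable then gains a factor that beats the $\varepsilon$- and $e^{c}$-losses. A ``finite-index adjustment'' cannot replace this, since the subgroup generated by finite-index subgroups of the factors containing $w$ has, in general, infinite index in the amalgam.

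\textbf{The $w$-exponents in a single coordinate.} Your bound $\prod_i\varepsilon\|w^{n_i}\delta_i\|$ for one coordinate is false: $(w^n\delta)_{12}=\pi^n\delta_{12}$ is negligible compared with $\|w^n\delta\|$ when $n<0$. The second coordinate is not there to handle degree cancellation; in part (2) the leading coefficient is a single product of nonzero entries. It is needed because one coordinate sees the $w$-exponents only with a sign. In part (2) the paper combines the two coordinates by AM--GM to get the $w$-invariant quantity $\prod|g_{12}g_{21}|$, which bounds $\ell$ and the $\|\delta_i\|$. It then controls the $w$-exponents by a separate finiteness argument applied to $g_m^{-1}g_n$.

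\textbf{Part (1): the missing tie.} The missing idea is how to make the norm grow like $|\pi|^{4(|r_i|+|s_i|)}$. With pairwise distinct exponent differences, the top-degree path through a syllable $w^r\delta w^s$ is unique. Its contribution is a single entry of $\mathrm{sym}^4(w^r\delta w^s)$, namely $\pi^{\chi r+\chi's}$ times an entry of $\mathrm{sym}^4(\delta)$, for fixed weights $\chi,\chi'$. The $\alpha^{-1}$-coordinate gives $\pi^{\chi'r+\chi s}$ instead, and for suitable signs of $r,s$ both are small. So no bound $e^{a|\gamma|-c}$ follows, and your claim that the top coefficient is a product of entries of $\mathrm{sym}^4(\delta_i)$ overlooks exactly these $\pi$-powers.

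The paper deliberately creates a tie. In the basis $(e_1^2e_2^2,e_1^4,e_2^4,e_1^3e_2,e_1e_2^3)$ it takes $\alpha=\mathrm{diag}(t^{-1},tz_1,tz_2,1,1)$. Then the $t^2$-coefficient of the $(1,1)$-entry (the weight-zero vector) of $\rho_+(\gamma_{2i-1}w^{r_i}\gamma_{2i}w^{s_i})$ is $D_i=6z_1\pi^{4r_i}X_i^2+6z_2\pi^{-4r_i}Y_i^2$. Here $X_i,Y_i$ are products of four entries of $\gamma_{2i-1}\in\Gamma_{1\varepsilon}$ and $\gamma_{2i}\in\Gamma_{2\varepsilon}$, coming from the two competing paths through $e_1^4$ and $e_2^4$.
\begin{itemize}
\item Since $\mathrm{val}(z_1z_2^{-1})=1$ is odd, the two terms cannot cancel. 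Hence $|D_i|$ is their maximum, which is at least $e^{8c}|\pi|^{4|r_i|}(\|\gamma_{2i-1}\|\,\|\gamma_{2i}\|)^4$; this is where $6\neq 0$ is used.
\item The $t^{2\ell}$-coefficient of the $(1,1)$-entry of the product is $D_1\cdots D_\ell$.
\item $\rho_-$ gives the analogous bound with $|\pi|^{4|s_i|}$.
\item One first translates $g$ by elements of a fixed finite set, so that all $w$-powers sit on the $\Gamma_2$-syllables and the word begins (for $\rho_+$), respectively ends (for $\rho_-$), with an element of $\Gamma_{1\varepsilon}$. Summing the two coordinates then gives $\ge e^{2a|g|_\Gamma}$ up to constants.
\end{itemize}
Your generic choice of exponents destroys precisely this tie.
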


\begin{proof} By conjugating $\Gamma_1$ and $\Gamma_2$ with an element of $\mathsf{GL}_2(k)$, $\Gamma_1\cap \Gamma_2=\langle w\rangle$ and  $w=\textup{diag}(\pi,\pi^{-1})$ for some $\pi \in k$ with $|\pi|>1$. Fix a metric on $\Gamma_i$ and let $|\cdot|_i:\Gamma_i\rightarrow \mathbb{N}$ the be associated word length function. Since $\Gamma_i$ is quasi-isometrically embedded in $\mathsf{SL}_2(k)$, there are $c,a>0$ such that \begin{align}\label{qie}||\gamma||\geq e^{a|\gamma|_i-c}, \ \forall \ \gamma \in \Gamma_i. \end{align}  By Lemma \ref{doublecoset}, there is $\varepsilon>0$, such that the following \hbox{decompositions hold} \begin{align}\label{decomp-Gamma}\Gamma_i&=(\Gamma_1\cap \Gamma_2)\Gamma_{i\varepsilon}(\Gamma_1\cap \Gamma_2)\end{align} where $\Gamma_{i\varepsilon}:= \bigcap\big(\Gamma_i \cap \big\{g\in \mathsf{SL}_2(k): |g_{jj'}|\geq \varepsilon ||g||,  j,j'=1,2\big\}\big)$. In order to simplify the notation, set $\Gamma:=\Gamma_1\ast_{\langle w\rangle}\Gamma_2$; the latter group is Gromov-hyperbolic by \cite{BF92}.

\begin{proof}[Proof of Theorem \ref{prod-qie}(\ref{part-1}).] Assume that $\textup{char}k\neq 2,3$.  Define the subsets of $\Gamma$ as follows: \begin{align*} \mathcal{A}_1&:=\big\{\gamma_1 (w^{r_1}\gamma_2w^{s_1})\cdots \gamma_{2\ell-1} (w^{r_\ell}\gamma_{2\ell}w^{s_\ell}): \gamma_{2i-1} \in \Gamma_{1\varepsilon}, \gamma_{2i}\in \Gamma_{2\varepsilon}, r_i,s_i\in \mathbb{Z}, \ell \in \mathbb{N} \big\}\\ \mathcal{A}_2&:=\big\{(w^{r_1}\gamma_1w^{s_1})\gamma_2 \cdots (w^{r_\ell}\gamma_{2\ell-1}w^{s_\ell})\gamma_{2\ell}: \gamma_{2i-1} \in \Gamma_{2\varepsilon}, \gamma_{2i}\in \Gamma_{1\varepsilon}, r_i,s_i\in \mathbb{Z}, \ell\in \mathbb{N} \big\}.\end{align*}

Fix the ordered basis $(e_1^2e_2^2,e_1^4,e_2^4, e_1^3e_2,e_2^3e_1)$ of $V=\textup{Sym}^4k(t)^2$ with respect to which the symmetric power $\textup{sym}^4 :\mathsf{SL}_2(k(t))\rightarrow \mathsf{SL}(V)$ has the following matrix form \begin{align}\label{sym4} \textup{sym}^4 \left(\begin{pmatrix}\kappa  & \lambda \\ \mu & \nu \end{pmatrix}\right)=\begin{pmatrix}[1.05]
1+6\kappa \lambda \mu \nu & 6\kappa^2\mu^2  & 6\lambda^2\nu^2  &  \cdots \\
\kappa^2\lambda^2 & \kappa^4  & \lambda^4  & \cdots  \\
 \mu^2 \nu^2& \mu^4 & \nu^4 & \cdots  \\
 \vdots &  \vdots &  \vdots & \ddots \\
\end{pmatrix}\end{align} where the other entries of the matrix above are polynomials in the variables $\{\kappa,\lambda, \mu,\nu\}$.

 Fix $z_1,z_2\in k$ such that $|z_i|\geq \frac{e^{8c}}{|6|\varepsilon^{18}}$ and $\textup{val}\left(z_1z_2^{-1}\right)=1$ and set $$\alpha:= \begin{pmatrix}t^{-1} & & \\ &  tz_1 & &   \\ & & tz_2 &   \\ & & & \textup{I}_2 \end{pmatrix}.$$ Define the representations $\rho_{\pm}: \Gamma \rightarrow \mathsf{SL}(V)$ as follows, $$\rho_{\pm}(\gamma)= \left\{\begin{matrix}[1.1]
\textup{sym}^4(\gamma) & \gamma\in \Gamma_1  \\
\alpha^{\pm 1} \textup{sym}^4(\gamma) \alpha^{\mp 1}  & \gamma\in \Gamma_2  \\
\end{matrix}.\right.$$ Note that $\rho_{\pm}$ is well-defined since $\alpha$ commutes with  $\textup{sym}^4(w)= \textup{diag}(1,\pi^4,\pi^{-4}, \pi^2,\pi^{-2})$. Fix two words of the form \begin{align*} \gamma_1 (w^{r_1}\gamma_2w^{s_1})\cdots \gamma_{2\ell-1} (w^{r_\ell}\gamma_{2\ell}w^{s_\ell})&\in \mathcal{A}_1, \ \gamma_{2i-1} \in \Gamma_{1\varepsilon}, \gamma_{2i}\in \Gamma_{2\varepsilon}, r_i,s_i\in \mathbb{Z}\\ (w^{r_1'}\gamma_1'w^{s_1'})\gamma_2' \cdots (w^{r_\ell'}\gamma_{2\ell-1}'w^{s_\ell'})\gamma_{2\ell}'&\in \mathcal{A}_2, \ \gamma_{2i-1}' \in \Gamma_{2\varepsilon}, \gamma_{2i}'\in \Gamma_{1\varepsilon}, r_i',s_i'\in \mathbb{Z}. \end{align*} By using (\ref{sym4}), for every $i=1,\ldots, \ell$, the products \begin{align*} \rho_{+}(\gamma_{2i-1}w^{r_i}\gamma_{2i}w^{s_i})&=\textup{sym}^4(\gamma_{2i-1})\alpha \big( \textup{sym}^4 (w^{r_i}\gamma_{2i}w^{s_i})\big)\alpha^{-1}\\ \rho_{-}(w^{r_i'}\gamma_{2i-1}'w^{s_i'}\gamma_{2i}')&=\alpha^{-1}\big(\textup{sym}^4(w^{r_i'}\gamma_{2i-1}'w^{s_i'})\big)\alpha \big(\textup{sym}^4 (\gamma_{2i}')\big) \end{align*} have their $(1,1)$-entry a polynomial in $k[t,t^{-1}]$ of degree $2$ such that the coefficients of $t^2$ respectively are  \begin{align*}D_i:&= 6z_1 \Big( (\gamma_{2i-1})_{11}(\gamma_{2i-1})_{21} (w^{r_i}\gamma_{2i}w^{s_i})_{11} (w^{r_i}\gamma_{2i}w^{s_i})_{12} \Big)^2+\\ &+6z_2 \Big((\gamma_{2i-1})_{12}(\gamma_{2i-1})_{22}(w^{r_i}\gamma_{2i}w^{s_i})_{21} (w^{r_i}\gamma_{2i}w^{s_i})_{22}\Big)^2\\
&=6z_1 \pi^{4r_i} \Big( (\gamma_{2i-1})_{11}(\gamma_{2i-1})_{21} (\gamma_{2i})_{11} (\gamma_{2i})_{12} \Big)^2+\frac{6z_2}{\pi^{4r_i}} \Big((\gamma_{2i-1})_{12}(\gamma_{2i-1})_{22}(\gamma_{2i})_{21} (\gamma_{2i})_{22}\Big)^2\\ D_i':&= 6z_1 \Big( (w^{r_i'}\gamma_{2i-1}'w^{s_i'})_{11}(w^{r_i'}\gamma_{2i-1}'w^{s_i'})_{21} (\gamma_{2i}')_{11} (\gamma_{2i}')_{12} \Big)^2+\\ &+6z_2 \Big((w^{r_i'}\gamma_{2i-1}'w^{s_i'})_{12}(w^{r_i'}\gamma_{2i-1}'w^{s_i'})_{22}(\gamma_{2i}')_{21} (\gamma_{2i}')_{22}\Big)^2 \\ 
&=6z_1 \pi^{4s_i'} \Big( (\gamma_{2i-1}')_{11}(\gamma_{2i-1}')_{21} (\gamma_{2i}')_{11} (\gamma_{2i}')_{12} \Big)^2+\frac{6z_2}{\pi^{4s_i'}} \Big((\gamma_{2i-1}')_{12}(\gamma_{2i-1}')_{22}(\gamma_{2i}')_{21} (\gamma_{2i}')_{22}\Big)^2.  \end{align*} 

By (\ref{decomp-Gamma}),  for every $j,j'=1,2$ and $\bigast=2i-1,2i$,  \begin{align*}|(\gamma_{ \bigast})_{jj'}|\geq \varepsilon ||\gamma_{ \bigast}||, \ |(\gamma_{ \bigast }')_{jj'}|\geq \varepsilon ||\gamma_{ \bigast }'||. \end{align*} Moreover, since $\textup{val}(z_1z_2^{-1})=1$,  by the ultrametric property\footnote{For $x,y\in k^{\ast}$, $\textup{val}(x^{-2}y^2)\in 2\mathbb{N}$. As $\textup{val}(z_1z_2^{-1})=1$, $|z_1x^2|\neq |z_2y^2|$, $|z_1x^2+z_2y^2|=\max\{|z_1x^2|,|z_2y^2|\}$.} of the absolute value on $k$ and since $|z_1|, |z_2|\geq \frac{e^{8c}}{|6|\varepsilon^{18}}$, we obtain the bounds \begin{align*}|D_i|=  |6|\max\Big\{|z_1 \pi^{4r_i}| &\big|(\gamma_{2i-1})_{11}(\gamma_{2i-1})_{21} (\gamma_{2i})_{11} (\gamma_{2i})_{12}\big|^2,|z_2\pi^{-4r_i}|\big|(\gamma_{2i-1})_{12}(\gamma_{2i-1})_{22}(\gamma_{2i})_{21} (\gamma_{2i})_{22}\big|^2\Big\}\\ & \geq e^{8c} |\pi|^{4|r_i|} \big(||\gamma_{2i-1}||\cdot ||\gamma_{2i}||\big)^4\\ |D_i'|= |6|\max\Big\{|z_1 \pi^{4s_i'}| &\big|(\gamma_{2i-1}')_{11}(\gamma_{2i-1}')_{21}  (\gamma_{2i}')_{11} (\gamma_{2i}')_{12}\big|^2,|z_2\pi^{-4s_i'}|\big|(\gamma_{2i-1}')_{12}(\gamma_{2i-1}')_{22}(\gamma_{2i}')_{21} (\gamma_{2i}')_{22}\big|^2\Big\}\\ & \geq e^{8c} |\pi|^{4|s_i'|}\big( ||\gamma_{2i-1}'||\cdot ||\gamma_{2i}'||\big)^4. \end{align*} 

 Each $(j,1)$-entry (resp. $(1,j)$-entry) of $\rho_{+}(\gamma_{2i-1}w^{r_i}\gamma_{2i}w^{s_i})$ (resp. $\rho_{-}(w^{r_i'}\gamma_{2i-1}'w^{s_i'}\gamma_{2i}')$)  is a polynomial in $k[t,t^{-1}]$ of degree at most $2$ while all others entries are of degree at most  $1$. The $(1,1)$-entry of $\rho_{+}\big(\gamma_1 (w^{r_1}\gamma_2w^{s_1})\cdots \gamma_{2\ell-1} (w^{r_\ell}\gamma_{2\ell}w^{s_\ell})\big)$ and $\rho_{-}\big(\gamma_1' (w^{r_1'}\gamma_2'w^{s_1'})\cdots \gamma_{2\ell-1}'(w^{r_\ell'}\gamma_{2\ell}'w^{s_\ell'})\big)$ is a polynomial in $k[t,t^{-1}]$  whose leading terms are $(D_1\cdots D_{\ell}) t^{2\ell}$ and $(D_1'\cdots D_{\ell}') t^{2\ell}$ respectively. Therefore, their Gauss norms satisfy the estimates \begin{align*}\big|\big|\rho_{+}\big(\gamma_1 (w^{r_1}\gamma_2w^{s_1})\cdots \gamma_{2\ell-1} (w^{r_\ell}\gamma_{2\ell}w^{s_\ell})\big)\big|\big|&\geq |D_1\cdots D_{\ell}|\\  \big|\big|\rho_{-}\big(\gamma_1' (w^{r_1'}\gamma_2'w^{s_1'})\cdots \gamma_{2\ell-1}' (w^{r_\ell'}\gamma_{2\ell}'w^{s_\ell'})\big)\big|\big|&\geq   |D_1'\cdots D_{\ell}'|.\end{align*} By using the previous bounds for $|D_i|$ and $|D_i'|$, we conclude that \begin{align}\label{rho+}\big|\big|\rho_{+}\big(\gamma_1 (w^{r_1}\gamma_2w^{s_1})\cdots \gamma_{2\ell-1} (w^{r_\ell}\gamma_{2\ell}w^{s_\ell})\big)\big|\big|\geq  e^{8\ell c} |\pi|^{4\sum |r_i|}\big(||\gamma_{1}||\cdots  ||\gamma_{2\ell}||\big)^4\\ 
\label{rho-}\big|\big|\rho_{-}\big(\gamma_1' (w^{r_1'}\gamma_2'w^{s_1'})\cdots \gamma_{2\ell-1}' (w^{r_\ell'}\gamma_{2\ell}'w^{s_\ell'})\big)\big|\big|\geq e^{8\ell c}|\pi|^{4\sum |s_i'|}\big(||\gamma_{1}'||\cdots  ||\gamma_{2\ell}'||\big)^4.\end{align}

Since $\Gamma_1,\Gamma_2$ generate $\Gamma$, consider the length function $|\cdot|_{\Gamma}$ induced by a choice of metrics on $\Gamma_{i}$. Fix a finite subset\footnote{Take $\mathcal{F}=\{1,w_{11},w_{12},w_{21},w_{22}\}$ such that $w_{1i},w_{2i}\in \Gamma_{i\varepsilon }$ generate a rank-2 free subgroup of $\Gamma_i$.} $\mathcal{F}$ of $\Gamma_{1\varepsilon}\cup\Gamma_{2\varepsilon}\cup\{1\}$ such that for any $g\in \Gamma$ there are $h_1,h_2\in \mathcal{F}$ (resp. $h_1',h_2'\in \mathcal{F}$) such that $h_1g h_2\in \mathcal{A}_1$ (resp. $h_1'g h_2'\in \mathcal{A}_2$). 

Let $g\in \Gamma$ be non-trivial. By using (\ref{decomp-Gamma}) there are  $h_0,h_0'\in \mathcal{F}$ and $\ell\in \mathbb{N}$ such that $$h_0gh_0'=w^{r_0}\gamma_{1}(w^{r_1}\gamma_2 w^{s_1})\cdots \gamma_{2\ell-1}(w^{r_{\ell}}\gamma_{2\ell}w^{s_{\ell}})$$ where $r_i,s_i\in \mathbb{Z}$, $\gamma_{2i-1}\in \Gamma_{1\varepsilon }, \gamma_{2i} \in \Gamma_{2\varepsilon}$. There are two cases to consider:
\medskip

\noindent {\em Case 1. $r_0=0$.} Choose $h_i\in \mathcal{F}\cap \Gamma_{i\varepsilon }, i=1,2$, such that $h_2(h_0gh_0')h_1\in \mathcal{A}_2$. Since $h_0gh_0'\in \mathcal{A}_1$, by using (\ref{rho+}) and (\ref{rho-}), it follows that  \begin{align*} || \rho_{+}(h_0gh_0')||&\geq e^{8\ell c}|\pi|^{4\sum |r_i|} \big(||\gamma_{1}||\cdots  ||\gamma_{2\ell}||\big)^4\\ || \rho_{-}(h_2(h_0gh_0')h_1)||&\geq e^{8\ell c}|\pi|^{4\sum |s_i|} ||h_1||\cdot ||h_2||\cdot \big(||\gamma_{1}||\cdots  ||\gamma_{2\ell}||\big)^4.\end{align*}

Set $C:=\max\{||h||^4: h\in \mathcal{F}\}$. By the sub-multiplicativity of the operator norm, \begin{align*}C\Big(||\rho_{+}(h_0 gh_0')\big|\big|+\big|\big|\rho_{-}(h_0gh_0')||\Big)&\geq ||\rho_{+}(h_0gh_0')||+||\rho_{-}(h_2(h_0gh_0')h_1)||\\ &\geq   \sqrt[]{4||h_1||\cdot ||h_2||} e^{8\ell c} |\pi|^{2\sum(|r_i|+|s_i|)}\big(||\gamma_{1}||\cdots  ||\gamma_{2\ell}||\big)^4\\ &\geq 2 e^{8\ell c} \big(||\gamma_{1}||\cdot ||w^{r_1}\gamma_{2}w^{s_1}||\cdots ||\gamma_{2\ell-1}||\cdot ||w^{r_\ell}\gamma_{2\ell}w^{s_\ell}||\big)^2\\ &\geq 2e^{4\ell c} e^{2a\big(|\gamma_{1}|_{1}+|w^{r_1}\gamma_{2}w^{s_1}|_2+\cdots+ |\gamma_{2\ell-1}|_{1}+|w^{r_\ell}\gamma_{2\ell}w^{s_\ell}|_2\big)} \\ & \geq e^{2a|h_0 gh_0'|_{\Gamma}}. \end{align*} 

\noindent {\em Case 2. $r_0\neq 0$.} Choose $h_i'\in \mathcal{F}\cap \Gamma_{i\varepsilon}, i=1,2$, such that $h_1'h_2'(h_0gh_0')\in \mathcal{A}_1$. By using the bound from Case 1 above, we obtain \begin{align*}C\big(||\rho_{+}(h_1'h_2'(h_0gh_0'))||+||\rho_{-}(h_1'h_2'(h_0gh_0'))||\big) & \geq e^{2a|h_1' h_2'(h_0 gh_0')|_{\Gamma}}.\end{align*}
It follows that $\rho_{+}\times \rho_{-}:\Gamma \rightarrow \mathsf{SL}_5(k(t))\times \mathsf{SL}_5(k(t))$ is a quasi-isometric embedding and Theorem \ref{prod-qie}(\ref{part-1}) follows.\end{proof}

\begin{proof}[Proof of Theorem \ref{prod-qie}(\ref{part-2})] Recall that $\Gamma=\Gamma_1\ast_{\langle w\rangle}\Gamma_2$. Fix $z\in k$ with $|z|\geq \frac{1}{\varepsilon^4}$ and set $$\beta:=\begin{pmatrix} \frac{1}{tz} & \\ & tz \end{pmatrix}$$ centralizing $\langle w\rangle$. Consider the unique representation $\rho_{\pm}':\Gamma \rightarrow \mathsf{SL}_2(k(t))$ satisfying $$\rho_{\pm}'(\gamma)= \left\{\begin{matrix}
\gamma & \gamma\in \Gamma_1  \\
\beta^{\pm 1} \gamma \beta^{\mp 1}  & \gamma\in \Gamma_2  \\
\end{matrix}\right..$$ 

Fix $a_{1},\ldots,a_{\ell}\in \Gamma_1, b_{0}\ldots,b_{\ell}\in \Gamma_2$ non-trivial. By looking at the matrices $\rho_{+}'(a_1b_1\cdots a_\ell b_\ell)\\=a_1\beta b_1\beta^{-1}\cdots a_\ell \beta b_\ell \beta^{-1}$ (resp. $\rho_{-}'(a_1b_1\cdots a_\ell b_\ell)=a_1\beta^{-1} b_1\beta \cdots a_\ell \beta^{-1} b_\ell \beta$) we check that the $(1,1)$-entry (resp. $(2,2)$-entry)  is a polynomial of degree $2\ell$ whose leading term is $(tz)^{2\ell}(a_{1})_{12}(b_1)_{21}\cdots (a_{\ell})_{12}(b_\ell)_{21}$ (resp. $(tz)^{2\ell}(a_{1})_{21}(b_1)_{12}\cdots (a_{\ell})_{21}(b_\ell )_{12}$). The Gauss norm of the previous products satisfy the bounds \begin{align*}\big|\big|\rho_{+}'(a_1b_1\cdots a_\ell b_\ell)\big|\big|&\geq |z|^{2\ell} \big|(a_{1})_{12}(b_1)_{21}\cdots (a_{\ell})_{12}(b_\ell)_{21}\big|\\ \big|\big|\rho_{-}'(a_1b_1\cdots a_\ell b_\ell)\big|\big|&\geq |z|^{2\ell} \big|(a_{1})_{21}(b_1)_{12}\cdots (a_{\ell})_{21}(b_\ell)_{12}\big|.\end{align*} In particular, by the arithmetic-geometric mean inequality we obtain the bounds \begin{align}\label{AM-GM-bound}\big|\big|\rho_{+}'(a_1b_1\cdots a_\ell b_\ell)\big|\big|+ \big|\big|\rho_{-}'(a_1b_1\cdots a_\ell b_\ell)\big|\big|&\geq  |z|^{2\ell}\sqrt[]{|(a_{1})_{12} (a_1)_{21}\cdots (b_{\ell})_{12}(b_\ell)_{21}|}\end{align} and similarly   \begin{align}\label{AM-GM-bound-2}\hspace{-.1cm} \big|\big|\rho_{+}'(b_0a_1\cdots b_\ell)\big|\big|+ \big|\big|\rho_{-}'(b_0a_1\cdots b_\ell)\big|\big|&\geq |z|^{2\ell} \sqrt[]{|(b_0)_{12}(b_0)_{21}(a_{1})_{12} (a_1)_{21}\cdots (b_{\ell})_{12}(b_\ell)_{21}|}.\end{align} 

The image of $\rho_{+}'\times \rho_{-}':\Gamma \rightarrow \mathsf{SL}_2(k(t))\times \mathsf{SL}_2(k(t))$ is metrically proper. It suffices to prove that any sequence $(g_n)$ in $\Gamma$ satisfying $$\sup_{n\in \mathbb{N}} \big(||\rho_{+}'(g_n)||+||\rho_{-}'(g_n)||\big)\leq M$$ for some $M>1$ is finite.  Up to multiplying $g_n$ from the right and left with an element in a fixed finite subset of $\Gamma_1\cup\Gamma_2$ and using (\ref{decomp-Gamma}), we may write $$g_n=(w^{s_{1n}}a_{1n}')(w^{s_{2n}}b_{1n}')\cdots (w^{s_{(2r-1)n}}a_{rn}')(w^{s_{2rn}}b_{rn}' w^{\ell_n})$$ in reduced form, where $r=r(n)\in \mathbb{N}$, $a_{in}'\in \Gamma_{1\varepsilon}$, $b_{in}'\in \Gamma_{2\varepsilon}$.
Since $(w^{m_1}gw^{m_2})_{12}(w^{m_1}gw^{m_2})_{21}=g_{12}g_{21}$ for any $m_1,m_2\in \mathbb{Z}$ and $g\in \Gamma_{1\varepsilon}\cup \Gamma_{2\varepsilon}$ we have \begin{align*} \big|(w^{s_{(2i-1)n}}a_{in}')_{12}(w^{s_{(2i-1)n}}a_{in}')_{21}\big|&\geq (\varepsilon ||a_{in}'||)^2\\ \big|(w^{s_{2in}}b_{in}')_{12}(w^{s_{2in}}b_{in}')_{21}\big|&\geq (\varepsilon ||b_{in}'||)^2\\ |(w^{s_{2rn}}b_{rn}' w^{\ell_n})_{12}(w^{s_{2rn}}b_{rn}' w^{\ell_n})_{21}|&\geq (\varepsilon ||b_{rn}'||)^2\end{align*}  By (\ref{AM-GM-bound}) we have that $$M\geq ||\rho_{+}'(g_n)||+||\rho_{-}'(g_n)||\geq (\varepsilon |z|)^r ||a_{1n}'||\cdot ||b_{1n}'||\cdots ||a_{rn}'||\cdot   ||b_{rn}'||\geq (\varepsilon |z|)^r.$$ This shows $r\leq \frac{M}{\log\varepsilon |z|}$ and $||a_{in}'||\leq M$, $||b_{in}'||\leq M$ for every $i$. As $\Gamma_i$ are discrete in $\mathsf{SL}_2(k)$, $(b_{in}'),(a_{in}')$ are finite sequences. If $(s_{1n})$ is infinite, for fixed $m$ and infinitely many $n$, $(a_{1m}')^{-1}w^{s_{1n}-s_{1m}}a_{1n}\neq 1$. Since $||\rho_{\pm}(g_{m}^{-1})||=||\rho_{\pm}(g_m)||$, $$||\rho_{+}(g_m^{-1}g_n)||+||\rho_{-}(g_m^{-1}g_n)||\leq M^2.$$  By writing $$g_{m}^{-1}g_n=\underbrace{w^{-\ell_m} (b_{rm}')^{-1}w^{-s_{2rm}}}_{\in \Gamma_2\smallsetminus \{1\}}\cdots \underbrace{(b_{1m}')^{-1}w^{-s_{2m}}}_{\in \Gamma_2\smallsetminus \{1\}} \underbrace{(a_{1m}')^{-1}w^{s_{1n}-s_{1m}}a_{1n}'}_{\in \Gamma_1\smallsetminus \{1\}}\cdots \underbrace{w^{s_{2rn}}b_{rn}'w^{\ell_n}}_{\in \Gamma_2\smallsetminus \{1\}}$$ in reduced form and using (\ref{AM-GM-bound-2}) it follows that $$M^2\geq ||\rho_{+}(g_m^{-1}g_n)||+||\rho_{-}(g_m^{-1}g_n)||\geq (\varepsilon |z|)^{2(r-1)} ||(a_{1m}')^{-1}w^{s_{1n}-s_{1m}}a_{1n}'||.$$ As $w=\textup{diag}(\lambda,\lambda^{-1})$, by the previous bound, the $(1,2)$-entry of $(a_{1m}')^{-1}w^{s_{1n}-s_{1m}}a_{1n}'$ satisfies $$\big|\pi^{s_{1n}-s_{1m}}(a_{1m}')_{22}(a_{1n}')_{12}-\pi^{s_{1m}-s_{1n}}(a_{1m}')_{12}(a_{1n}')_{22}\big|\leq M^2.$$ Since $a_{1n}',a_{1m}'$ have all of their entries non-zero lying in finite subset of $k$, the sequence $(|s_{1n}-s_{1m}|)$ is also finite. This shows that $(w^{s_{1n}}a_{1n}')$ is a finite sequence. 
By continuing similarly, as $r\leq \frac{M}{\log\varepsilon |z|}$ and using (\ref{AM-GM-bound}) and (\ref{AM-GM-bound-2}), $(w^{s_{(2i-1)r}}a_{in}'), (w^{s_{(2i)r}}b_{2in}')$, $(\ell_n)$ are finite sequences and thus $(g_n)$ is finite. It follows that $(\rho_{+}\times \rho_{-})(\Gamma)\simeq \Gamma$ is a metrically proper subgroup of $\mathsf{SL}_2(k(t))\times \mathsf{SL}_2(k(t))$.
\end{proof}\renewcommand{\qedsymbol}{}\end{proof}

\vspace{-0.7cm}

The method of conjugating one amalgam factor with a transcendetal matrix, centralizing the common subgroup, was first used in \cite{Shalen} to prove linearity of amalgams (see also \cite{TT-anosov, DT-cyclic} for certain amalgams along cyclic groups).

\begin{proof}[Proof of Theorem \ref{qie-thm}] Let $\Gamma_1$ be a free group of finite-rank and $\langle w\rangle$ a maximal cyclic subgroup of $\Gamma_1$. Since $\mathsf{SL}_2(k)$ contains a quasi-isometrically embedded free subgroup, there is a quasi-isometric embedding of $\Gamma_1$ in $\mathsf{SL}_2(k)$ such that $\langle w\rangle$ is diagonal.  Fix $\sigma\in k$ transcendental over the subfield of $k$ generated by the entries of the elements of $\Gamma_1$ and let $$\Gamma_2:=\begin{pmatrix} \sigma &\\ & \frac{1}{\sigma}\end{pmatrix}\Gamma_1\begin{pmatrix} \frac{1}{\sigma} &\\ & \sigma\end{pmatrix}.$$ Lemma \ref{doublecoset} implies $\Gamma_1\cap \Gamma_2=\langle w\rangle$. By Theorem \ref{prod-qie}, the amalgam $\Gamma_1\ast_{\langle w\rangle}\Gamma_2$ (which is isomorphic to the double $\Gamma_1\ast_{\langle w\rangle}\Gamma_1$) embeds in $\mathsf{SL}_5(k(t))\times \mathsf{SL}_5(k(t))$ as a quasi-isometrically embedded subgroup. \end{proof}

\begin{remarks} \normalfont{(1) When $\Gamma_1=\Gamma_2$ the double $\Gamma:=\Gamma_1\ast_{\langle w\rangle}\Gamma_2$ in Theorem \ref{prod-qie} contains infinite index surface groups \cite{Gordon-Wilton}. When $\Gamma$ is not even free, $\rho'_{+}\times \rho'_{-}:\Gamma \rightarrow \mathsf{SL}_2(k(t))\times \mathsf{SL}_2(k(t))$ in Theorem \ref{prod-qie}(\ref{part-2}) projects on each factor to a non-metrically proper subgroup of $\mathsf{SL}_2(k(t))$, as $\rho'_{\pm}(\Gamma)$ cannot act properly on the Bruhat--Tits tree $\mathcal{T}$ of $\mathsf{SL}_2(k(t))$ \cite[Thm. 4, p. 21]{Serre}.
\vspace{.12cm}

\noindent (2) In the notation above, $\Gamma$ is cubulated \cite{Hsu-Wise}, virtually compact special \cite{Agol} and embeds as a quasi-isometrically embedded subgroup of $\mathsf{SL}_d(\mathbb{R})$ \cite{DFWZ}, where $d$ depends on the cubulation of $\Gamma$.  Over the field $k(t)$, Theorem \ref{prod-qie}(\ref{part-1}) provides quasi-isometric embeddings of any such double $\Gamma$ in dimension $10$. Theorem \ref{prod-qie}(\ref{part-2}) also implies that $\Gamma$ is linear over $\mathbb{F}_p(x,t)$ for any prime $p$.  The fact that closed surface groups are $\mathbb{F}_p(x,t)$-linear was already established in \cite[Thm. 4]{FLSS} for $p\geq 5$ and in \cite[Thm. 8.4 \& Cor. 5.5]{Button} for any $p$.}\end{remarks}

We close this section with a metrically proper free subgroup of $\mathsf{SL}_2(k_1)\times \mathsf{SL}_2(k_2)$ with indiscrete projection \hbox{on each factor, where $k_1$ and $k_2$ are discretely valued non-local fields.}

\begin{exmp}\label{irreducible}\normalfont{{\em Discrete free subgroups of products projecting indiscretely on each factor.}
 Equip $\mathbb{Q}_p(t)$ with the Gauss norm extending the standard $p$-adic absolute value $|\cdot|=p^{-\textup{val}(\cdot)}$ on $\mathbb{Q}_p$ and let $||\cdot||_p$ be the induced $\ell_{\infty}$-norm on $\mathsf{SL}_2(\mathbb{Q}_p(t))$. Equip  $G:=\mathsf{SL}_2(\mathbb{Q}_5(t))\times \mathsf{SL}_2(\mathbb{Q}_{13}(t))$ with the product $||(\cdot,\cdot)||:=||\cdot||_5+||\cdot||_{13}$. 

Then $G$ contains a metrically proper free subgroup (with respect to $||(\cdot,\cdot)||$) whose projection on each factor is not discrete. Let us set $$\alpha:=\begin{pmatrix}[1.2]\frac{4}{5} & -\frac{3}{5} \\ \frac{3}{5} & \frac{4}{5} \end{pmatrix}, \ \beta:=\begin{pmatrix}[1.2]\frac{12}{13} & -\frac{5}{13} \\ \frac{5}{13} & \frac{12}{13} \end{pmatrix}.$$ Consider the representation $\varphi:\langle \alpha \rangle \ast \langle \beta \rangle \rightarrow \mathsf{SL}_2(\mathbb{Q}(t))$ defined as follows: $$\varphi(\alpha)=\alpha, \ \varphi(\beta)=\begin{pmatrix}\frac{65}{t} & \\  & 1 \end{pmatrix}\beta \begin{pmatrix}\frac{t}{65} & \\  & 1 \end{pmatrix}$$ and the diagonal embedding $\varphi': \langle \alpha \rangle \ast \langle \beta \rangle \rightarrow  G$, $\varphi'(g)=(\varphi(g),\varphi(g))$.

As any nonzero power of $\alpha$ and $\beta$ has its $(1,2)$ and $(2,1)$ entries opposite, for any reduced word $\gamma_1\cdots \gamma_r\in \langle \alpha\rangle \ast \langle \beta \rangle$ one of the entries of $\varphi(\gamma_1\cdots \gamma_r)$ is a polynomial in $\mathbb{Q}[t,t^{-1}]$ of degree $r$ or $r-1$, \hbox{whose leading term coefficient is} $$\pm 65^{\star}(\gamma_1)_{12}\cdots (\gamma_r)_{12}$$ where $\star \in \{-\frac{r}{2},\frac{\pm 1-r}{2}\}$.  For any $z\in \mathbb{Z}[\frac{1}{5},\frac{1}{13}]$ non-zero, $|z|_{\mathbb{E}} |z|_{5} |z|_{13}\geq 1$ (where $|\cdot|_{5,13}$ and $|\cdot|_{\mathbb{E}}$ are the $\{5,13\}$-value and Euclidean value respectively), thus by the arithmetic-geometric mean inequality the sum of the Gauss norms of $\varphi(\gamma_1\cdots \gamma_r)$ satisfies the estimate  \begin{align}\label{free-ineq} ||\varphi'(\gamma_1\cdots \gamma_{r})||\geq 2\sqrt[4]{{65}}^{r-1}\sqrt[]{\big(|(\gamma_1)_{12}|_{\mathbb{E}}\cdots |(\gamma_r)_{12}|_{\mathbb{E}}\big)^{-1}}.\end{align} 

The image of $\varphi'$ is metrically proper. If for a sequence $(g_\ell)$ in $\langle \alpha\rangle \ast \langle \beta \rangle$, $(||\varphi'(g_{\ell})||)$ is bounded by some $M>0$  for every $\ell$, by (\ref{free-ineq}) and up to subsequence, there is $1\leq r\leq 10\log M$ such that $g_{\ell}$ is written in reduced form $g_\ell=\gamma_{1\ell}\cdots \gamma_{r\ell}.$ If $(\gamma_{1\ell})$ is infinite, as $\alpha,\beta\in \mathsf{SO}_2(\mathbb{R})$, there is $b\in \mathbb{N}$ with $\gamma_{1b}^{-1}\gamma_{1\ell}\neq \textup{I}_2$ and $0<|(\gamma_{1b}^{-1}\gamma_{1\ell})_{12}|_{\mathbb{E}}\leq M^{-6}$ for infinitely many $\ell$.  By (\ref{free-ineq}), $$ M^{2}\geq ||\varphi'(g_{\ell})|| \cdot ||\varphi'(g_{b}^{-1})|| \geq ||\varphi'(\gamma_{rb}^{-1}\cdots (\gamma_{1b}^{-1}\gamma_{1\ell})\cdots \gamma_{r\ell})|| \geq \sqrt[]{\big|(\gamma_{1b}^{-1}\gamma_{1\ell})_{12}\big|_{\mathbb{E}}^{-1}}$$ contradicting the choice of $b\in \mathbb{N}$ above. Hence $(\gamma_{1\ell})$ is necssarily finite and inductively we deduce that $(\gamma_{i\ell})$ is finite for every $i=1,\ldots,r$. Hence the image of $\varphi'$ is metrically proper in $G$. Moreover, since $\langle \alpha \rangle$ (resp. $\langle \beta \rangle$) is precompact in $\mathsf{SL}_2(\mathbb{Q}_{13})$ (resp. $\mathsf{SL}_2(\mathbb{Q}_5)$), the projection of $\varphi'(\langle \alpha\rangle \ast \langle \beta \rangle)$ on each factor is indiscrete. \qed}
\end{exmp}

We note that the existence of non-elementary discrete free subgroups in products $\prod_{i\in I}G_i(k_i)$ (where $|I|\geq 2$ is finite and $G_i$ is a simple algebraic group defined over the local field $k_i$) projecting densely on each factor is an open question asked by Yves Benoist (see \cite{BFMvL} and the references therein).

\subsection*{Acknowledgements} I would like to thank Sami Douba,   Miko\l{}aj Fraczyk and Beatrice Pozzetti for helpful discussions. The author was supported by the Dioscuri programme, initiated by the Max Planck Society, jointly managed with the National Science Center in Poland, and mutually funded by Polish Ministry of Education and Science and German Federal Ministry of Education and Research. The author also acknowledges support from  the Simons Laufer Mathematical Sciences Institute in Berkeley, California, during the Spring 2026 semester (National Science Foundation Grant No. DMS2424139). The author would also like to thank the  Max Planck Institute for Mathematics in the Sciences (Leipzig) for its hospitality where most of this work was conducted.

\subsection*{AI Disclosure} The author has not used any form of AI for the mathematical content presented in this article, nor to check or organise any of the proofs provided here.

\bibliographystyle{siam}

\bibliography{biblio.bib}

\end{document}